\documentclass[a4paper,11pt]{article}
\usepackage{geometry} 

\usepackage{verbatim}
\usepackage{fancyhdr}       
\usepackage[colorlinks, citecolor=blue]{hyperref}           
\usepackage {graphics}
\usepackage{subfigure}
\usepackage{float}
\usepackage{graphicx}
\usepackage{mathrsfs}
\usepackage{indentfirst, latexsym, bm, color}
\usepackage{amsmath, amssymb, amsfonts,amsthm}
\usepackage{caption}
\usepackage[all]{xy}
\usepackage{setspace}
\usepackage[named]{algorithm}
\usepackage{authblk}

\makeatletter
\newcommand*\bigcdot{\mathpalette\bigcdot@{.7}}
\newcommand*\bigcdot@[2]{\mathbin{\vcenter{\hbox{\scalebox{#2}{$\m@th#1\bullet$}}}}}
\makeatother

\theoremstyle{plain}
\theoremstyle{definition}
\newtheorem{definition}{Definition}
\newtheorem{lemma}[definition]{Lemma}
\newtheorem{theorem}[definition]{Theorem}
\newtheorem{proposition}[definition]{Proposition}
\newtheorem{corollary}[definition]{Corollary}
\newtheorem{example}[definition]{Example}
\newtheorem{remark}[definition]{Remark}

\newtheorem*{theoremA}{Theorem A}
\newtheorem*{theoremB}{Theorem B}
\newtheorem*{theoremC}{Theorem C}
\newtheorem*{them}{Theorem}

\pagestyle{fancy}
\fancyhf{ } 
\fancyhead[CO]{Dolbeault-type complexes on $G_2$ and $\mathrm{Spin}(7)$ manifolds}
\fancyhead[CE]{Xue Zhang}
\fancyhead[LE,RO]{~\thepage~}

\title{Dolbeault-type complexes on $G_2$- and $\mathrm{Spin}(7)$-manifolds}
\author{Xue Zhang \thanks{zxue@tsinghua.edu.cn}}
\affil{\footnotesize\emph{Department of Mathematical Sciences, Tsinghua University, Beijing, China}}

\setcounter{tocdepth}{1}

\begin{document}

\maketitle

\begin{abstract}
There are three types of Dolbeault complexes arising from representations of holonomy group on a Riemannian manifold, two of which are dual to each other. Such a complex is elliptic if and only if its generator satisfies an algebraic condition. We list all Dolbeault complexes on compact $G_2$- and $\mathrm{Spin}(7)$-manifolds. Each cohomology group can be described by harmonic forms.
\end{abstract}
\begin{keywords}
Elliptic complex, exceptional holonomy, $G$-structure
\end{keywords}

\section{Introduction}
It is well known that a de Rham complex on a smooth manifold $M$ is the cochain complex $(A^*(M),\mathrm{d})$ of differential forms on $M$ with the exterior derivative as its differential. If $M$ carries a Riemannian metric with special holonomy, each $A^k(M)$ possesses a direct sum decomposition. For instance, $A^k\otimes\mathbb{C}$ can split into $\oplus_{p+q=k}A^{p,q}$ with respect to a $U(n)$-action and there are Dolbeault complexes on complex manifolds. When the holonomy is a symplectic group, each quaternionic manifold owns a Dolbeault-type double complex  \cite{Widdows2002}. In Berger's classification \cite{Berger1955} of holonomy groups of a nonsymmetric, irreducible Riemannian manifold, there are two exceptional holonomy groups, $G_2$ in $7$ dimensions and $\mathrm{Spin}(7)$ in $8$ dimensions. An interesting question is that, how can one find all de Rham subcomplexes on a $G_2$ or $\mathrm{Spin}(7)$-manifold? More precisely,  
search for a sequence of subbundles $E_i\subset \Lambda^i(M),i=0,1,2,\cdots,$ such that the spaces of smooth sections of $E_i$ composes an elliptic complex (ref. \cite{AtiyahBott1967})
\begin{eqnarray}\label{firstellipc5}
0\longrightarrow\Gamma(E_0)\stackrel{D_0}{\longrightarrow}\Gamma(E_1)\stackrel{D_1}{\longrightarrow}\Gamma(E_2)\stackrel{D_2}{\longrightarrow}\cdots
\end{eqnarray}
with boundary operator $D_i=p_i\circ\mathrm{d},$ where each $p_i
:A^i(M)\rightarrow\Gamma(E_i)$ is an orthogonal projection with respect to a Riemannian metric. In general, this kind of complexes (\ref{firstellipc5}) can be divided into three types due to the condition $D_{i+1}D_i=0,$
\begin{itemize}
\item type \uppercase\expandafter{\romannumeral1}
\begin{eqnarray}\label{firellty1}
0\rightarrow A^0\stackrel{\mathrm{d}}{\rightarrow}A^1\stackrel{\mathrm{d}}{\rightarrow}\cdots\stackrel{\mathrm{d}}{\rightarrow}A^{k-1}\stackrel{D_0}{\rightarrow}\Gamma(E_0^\perp)\stackrel{D_1}{\rightarrow}\Gamma(E_1^\perp)\stackrel{D_2}{\rightarrow}\Gamma(E_2^\perp)\stackrel{D_3}{\rightarrow}\cdots,
\end{eqnarray}
\item type \uppercase\expandafter{\romannumeral2} 
\begin{eqnarray}\label{firellty2}
0\rightarrow \Gamma(F)\stackrel{D_0}{\rightarrow}\Gamma(E_0^\perp)\stackrel{D_1}{\rightarrow}\Gamma(E_1^\perp)\stackrel{D_2}{\rightarrow}\Gamma(E_2^\perp)\stackrel{D_3}{\rightarrow}\cdots,
\end{eqnarray}
\item type \uppercase\expandafter{\romannumeral3} 
\begin{eqnarray}\label{firsellty3}
0\rightarrow\Gamma(E_0)\stackrel{D_0}{\rightarrow}\Gamma(E_1)\stackrel{D_1}{\rightarrow}\Gamma(E_2)\stackrel{D_2}{\rightarrow}\cdots\stackrel{\mathrm{d}}{\rightarrow}A^{n-1}\stackrel{\mathrm{d}}{\rightarrow}A^{n}\rightarrow 0,
\end{eqnarray}
\end{itemize}
where $E_i=E_0\cdot\Lambda^i\subset\Lambda^{k+i}$ for $i=0,1,2,\cdots$ and $F\subset \Lambda^{k-1}.$ Type \uppercase\expandafter{\romannumeral1} and type \uppercase\expandafter{\romannumeral3} are dual to each other, that is, (\ref{firsellty3}) is the quotient complex of $A^*(M)$ by (\ref{firellty1}). We say that (\ref{firellty1}) and (\ref{firellty2}) are generated by $E_0$ and $(F,E_0)$ respectively. We write the complex by its generator if there is no ambiguity.

Salamon \cite{Salamon1989} and Reyes-Carri\'{o}n \cite{Reyes1998} generalized the notion of instanton to high dimensional Riemannian manifolds with special holonomy groups. Reyes-Carri\'{o}n showed in \cite{Reyes1998} that:
\begin{them}[Reyes-Carri\'{o}n]
Assume that $G\subset\mathrm{SO}(n)$ is a special holonomy group and the holonomy group of a $n$-manifold $(M,g)$ reduces to the normalizer of $G,$ then the complex generated by $\Lambda(\mathfrak{g})$ is elliptic. Here $\Lambda(\mathfrak{g})$ is a subbundle of $\Lambda^2(M)$ associated with the adjoint representation of $G.$
\end{them}
A key point in Reyes-Carri\'{o}n's result is that the fibers of generators of complexes are invariant under the action of holonomy group. Based on it, we are concerned in this paper with the action of holonomy group on exterior algebra. We introduce the notion of locally quasi-flat (in \S \ref{pdrcdef5}) and complete prime (in \S \ref{firssectionjy}). The condition $D^2=0$ for $(E,D)$ and $(F,E,D)$ is equivalent to that $E$ is locally quasi-flat. The elliptic conditions for $(E,D)$ and $(F,E,D)$ are equivalent to that $E$ and the pair $(F,E)$ are complete prime. We state our first result:
\begin{theoremA}
Let $G$ be a connected subgroup of $\mathrm{SO}(n)$ and let $(M,g,G,Q)$ be a closed Riemannian $n$-manifold with a compatible torsion-free $G$-structure $Q.$ Suppose $\mathcal{F}\subset\Lambda^{k-1}(\mathbb{R}^n)$ and $ \mathcal{E}\subset\Lambda^k(\mathbb{R}^n)$ are invariant subspaces under the action of $G.$ Set $F=Q\times_G\mathcal{F}\subset\Lambda^{k-1}(M)$ and $E=Q\times_G\mathcal{E}\subset\Lambda^k(M).$ Then $E$ is locally quasi-flat. Consequently,
\begin{itemize}
\item[(\romannumeral1)] If $\mathcal{E}$ is complete prime, then $(E,D)$ is an elliptic complex. 
\item[(\romannumeral2)] If $(\mathcal{F},\mathcal{E})$ is complete prime, then $(F,E,D)$ is an elliptic complex. 
 \end{itemize}
\end{theoremA}
Suppose $(M,g,G,Q)$ is as in Theorem A. The action of $G$ on $\Lambda^*(\mathbb{R}^n)$ splits $\Lambda^*(\mathbb{R}^n)$ into $\oplus_{k=0}^n\oplus_{l\in I_k}\Lambda_l^k$ an orthogonal direct sum of irreducible representations of $G,$ where each $\Lambda_l^k$ is of dimension $l.$ Also, we use the notation $\Lambda_l^k(M)$ for the subbundle $Q\times_G\Lambda_l^k$ of $\Lambda^k(M)$ and use $A_l^k$ for the space of smooth sections of $\Lambda_l^k(M).$ When $M$ is a $G_2$-manifold or $\mathrm{Spin}(7)$-manifold, the decomposition of $\Lambda^*(M)$ can be given explicitly, see \cite{Salamon1989},\cite{Joyce1996} or \cite{DJoyce1996}.  Apply Theorem A to $G_2$ and $\mathrm{Spin}(7)$-manifolds, we have
 \begin{theoremB}
Let $(M,g,\varphi)$ be a compact $G_2$-manifold. Then the following complexes on $M$ are elliptic
\begin{eqnarray*}
&&0\rightarrow A^0\rightarrow A^1\rightarrow A^2 \rightarrow A^3\rightarrow A_7^4\oplus A_{27}^4\rightarrow A_{14}^5\rightarrow 0,\\
&&0\rightarrow A_7^2\rightarrow A_1^3\oplus A_7^3\rightarrow A_1^4\rightarrow 0,\\
&&0\rightarrow A_1^3\oplus A_7^3\rightarrow A_1^4\oplus A_7^4\rightarrow 0.
\end{eqnarray*}
The first one is of type \uppercase\expandafter{\romannumeral1} generated by $\langle*\varphi\rangle.$ The last two are of type \uppercase\expandafter{\romannumeral2}. Moreover, if $\mathrm{Hol}(g)=G_2,$ except for the above three, $(\Lambda_{14}^2,D)$ and two complexes of type \uppercase\expandafter{\romannumeral3}, there is no other Dolbeault complexes on $M.$
 \end{theoremB}
 \begin{theoremC}
 Let $(M,g,\Omega)$ be a compact $\mathrm{Spin}(7)$-manifold. Then the following complexes on $M$ are elliptic
\begin{eqnarray*}
&&0\rightarrow A^0\rightarrow A^1\rightarrow A^2\rightarrow A^3\rightarrow A^\pm\rightarrow 0,\\
&&0\rightarrow A^0\rightarrow A^1\rightarrow A^2\rightarrow A_{48}^3\rightarrow A_{27}^4\rightarrow 0,\\
&&0\rightarrow A^0\rightarrow A^1\rightarrow A^2\rightarrow A^3\rightarrow A_1^4\oplus A_7^4\oplus A^-
\rightarrow A_8^5\rightarrow 0,\\
&&0\rightarrow A^0\rightarrow A^1\rightarrow A^2\rightarrow A^3\rightarrow A^4\rightarrow A^5\rightarrow A_{21}^6\rightarrow0,\\
&&0\rightarrow A_7^2\rightarrow A_8^3\rightarrow A_1^4\rightarrow0,\ \ 
0\rightarrow A_{21}^2\rightarrow A_{48}^3\rightarrow A_{27}^4\rightarrow0,\\
&&0\rightarrow A_8^3\rightarrow A_1^4\oplus A_7^4\rightarrow0,\ \ 
0\rightarrow A_1^4\oplus A_7^4\rightarrow A_8^5\rightarrow0.
\end{eqnarray*}
The first five are of type \uppercase\expandafter{\romannumeral1} generated by $\Lambda^\pm,\Lambda_8^3,\Lambda_{27}^4$ and $\Lambda_7^6$ respectively. The last four are of type \uppercase\expandafter{\romannumeral2}.
Morever, if $\mathrm{Hol}(g)=\mathrm{Spin}(7),$ then except for the above nine, $(\Lambda_{21}^2,D)$ and six complexes of type \uppercase\expandafter{\romannumeral3}, there is no other Dolbeault complexes on $M.$
 \end{theoremC}
 The key to proofs of Theorem B and Theorem C is based on an important fact that $G$ acts on the sphere $S^{n-1}$ transitively when $G$ is $G_2$ or $\mathrm{Spin}(7).$ See \cite{Borel1949} or \cite{MontSame1943}. However, these complexes cannot produce new geometric invariants. M. Fern\'{a}ndez and L. Ugarte \cite{Luis1998} computed the cohomology of the complex $(\Lambda_{14}^2,D).$ We show in \S \ref {lastsect77s} that, for every complex $C$ written as $0\rightarrow \Gamma(E_0)\rightarrow\Gamma(E_1)\rightarrow\cdots$ in Theorem B and Theorem C, the cohomology groups of $C$ can be described by harmonic forms, that is,
 $$H^i(C)\cong\{\alpha\in\Gamma(E_i)|\mathrm{d}\alpha=\mathrm{d}^*\alpha=0\}.$$
 \begin{remark}
By Wang's result \cite{Wang1989}, any $7$-manifold with a $G_2$-structure or $8$-manifold with a $\mathrm{Spin}(7)$-structure must be a spin manifold, and the associated metric is Ricci-flat. In this case, the spin bundle can be represented by appropriate $\Lambda_i^k.$ This forces that $D\alpha=0$ in Theorem B and Theorem C implies $\mathrm{d}\alpha=0.$ 
 \end{remark}
\section{ Linear algebra preliminaries}\label{firssectionjy}
\subsection{Prime forms in $\Lambda^{*}(\mathbb{R}^{n})$}
We begin by reviewing some facts about exterior algebra. Let $V$ be a real vector space and $\Lambda^{*}(V)$ the exterior algebra of $V$ which has a graded structure. If $V$ is equipped with a metric, there is a canonical isomorphism $i$ between $\Lambda^2(V)$ and the Lie algebra of the isometric transformation group of $V.$ For each $\alpha\in\Lambda^2(V),$ define the rank of $\alpha$ by $\mathrm{rank}\,\alpha:=\frac{1}{2}\mathrm{rank}\,i(\alpha).$ Note that the rank does not depend on metric of $V.$ In this section, we let $V=\mathbb{R}^n,n\geq 4$ be the Euclidean space.
\begin{definition}
We say a $k$-form $\alpha\in\Lambda^{k}(\mathbb{R}^{n})$ is \emph{prime} if the multiplication $\Lambda^1(\mathbb{R}^n)\rightarrow\Lambda^{k+1}(\mathbb{R}^n)$ by $\alpha$ is injective. Further, a linear subspace $E\subset\Lambda^{k}(\mathbb{R}^{n})$ is \emph{prime} if every non-zero form in $E$ is prime. 
\end{definition}
Obviously, $E\subset\Lambda^{k}(\mathbb{R}^{n})$ is prime if and only if the complex  
 \begin{eqnarray}\label{kprimcpm3}
 \Lambda^{k-2}(\mathbb{R}^{n})\stackrel{\lambda}{\longrightarrow}\Lambda^{k-1}(\mathbb{R}^{n})\stackrel{\lambda}{\longrightarrow}\Lambda^{k}(\mathbb{R}^{n})/E
 \end{eqnarray}
 is exact at $\Lambda^{k-1}(\mathbb{R}^{n})$ for all $\lambda\in\Lambda^{1}(\mathbb{R}^{n})-\{0\}.$
\begin{example}\label{lemzprimtwof2}
Consider the prime forms in $\Lambda^k(\mathbb{R}^n),n\geq 4$ for some special cases.
 \begin{itemize}
 \item  If $\alpha\in\Lambda^2(\mathbb{R}^n),$ then $\alpha$ is prime if and only if $\mathrm{rank}\,\alpha\geq 2.$
 \item When $k=n-2,$ every $\alpha\in\Lambda^{n-2}(\mathbb{R}^{n})$ can be expressed as 
 $$\alpha=\sum_{i,j}\alpha_{ij}\cdot e_1\cdots\hat{e}_i\cdots \hat{e}_j\cdots e_n$$
 with $\alpha_{ij}+(-1)^{i+j}\alpha_{ji}=0,$ where $\{e_1,\cdots,e_n\}$ is a basis for $\Lambda^1(\mathbb{R}^{n})$ and the hat denotes the omission of that element. Then $\alpha$ is prime if and only if the coefficient matrix $(\alpha_{ij})$ is invertible.
\item When $k=n-1,$ none of elements in $\Lambda^{n-1}(\mathbb{R}^{n})$ are prime. For each $\alpha\in \Lambda^{n-1}(\mathbb{R}^{n}),$ the multiplication by $\alpha$ is not injective since  
$$\mathrm{dim}\,\Lambda^{1}(\mathbb{R}^{n})=n>1=\mathrm{dim}\,\Lambda^{n}(\mathbb{R}^{n}).$$
\end{itemize}
 \end{example}
Let $*$ denote the Hodge star operator on $\Lambda^*(\mathbb{R}^n).$ Given a form $\alpha\in\Lambda^{n-2k}(\mathbb{R}^n),$ define a traceless operator $T_\alpha$ related to $\alpha$ by
$$T_\alpha:\ \Lambda^k(\mathbb{R}^n)\rightarrow\Lambda^k(\mathbb{R}^n),\ \ T_\alpha(\beta)=*(\alpha\cdot\beta).$$ 
The operator $T_\alpha$ is self-adjoint when $k$ is even and is anti-self-adjoint when $k$ is odd.
\begin{lemma}
Suppose $\alpha\in\Lambda^{n-2k}(\mathbb{R}^n).$ Then every eigenspace of $T_\alpha$ with non-zero eigenvalue is prime.
\end{lemma}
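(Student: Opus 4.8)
The plan is to argue by contradiction, reducing the primeness of each eigenform to the simultaneous vanishing relations $\lambda\wedge\beta=0$ and $\iota_\lambda\beta=0$, which together force $\beta=0$. I note at the outset that I will not need the self-adjointness or tracelessness of $T_\alpha$ recorded just before the lemma; the argument is purely algebraic and works directly from the eigenvalue equation.

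First I would fix a nonzero $\beta\in\Lambda^k(\mathbb{R}^n)$ with $T_\alpha\beta=c\beta$ and $c\neq 0$, and suppose for contradiction that $\beta$ is not prime. By the definition of prime, the map $\mu\mapsto\beta\wedge\mu$ on $\Lambda^1(\mathbb{R}^n)$ is then not injective, so there is a nonzero $\lambda\in\Lambda^1(\mathbb{R}^n)$ with $\beta\wedge\lambda=0$; since $\beta\wedge\lambda=(-1)^k\lambda\wedge\beta$, this is the same as $\lambda\wedge\beta=0$.

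The key step is to show that the eigenvalue equation forces $\iota_\lambda\beta=0$ as well. Moving $\lambda$ through $\alpha$ (which has degree $n-2k$) and using $\lambda\wedge\beta=0$ gives $\lambda\wedge(\alpha\wedge\beta)=\pm\,\alpha\wedge(\lambda\wedge\beta)=0$. Applying the standard Hodge identity $\iota_\lambda(*\mu)=\pm\,*(\lambda\wedge\mu)$ to $\mu=\alpha\wedge\beta\in\Lambda^{n-k}(\mathbb{R}^n)$, I obtain $\iota_\lambda\big(*(\alpha\wedge\beta)\big)=\pm\,*(\lambda\wedge\alpha\wedge\beta)=0$, that is $\iota_\lambda(T_\alpha\beta)=0$. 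Since $T_\alpha\beta=c\beta$ with $c\neq 0$, this yields $\iota_\lambda\beta=0$.

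Finally I would combine the two vanishing relations using the derivation property of the interior product: $0=\iota_\lambda(\lambda\wedge\beta)=|\lambda|^2\beta-\lambda\wedge\iota_\lambda\beta=|\lambda|^2\beta$, and since $\lambda\neq 0$ this forces $\beta=0$, contradicting the choice of $\beta$. Hence every nonzero $\beta$ in the eigenspace is prime, so the eigenspace itself is prime. The only point requiring care is the bookkeeping of signs in the identity $\iota_\lambda\circ *=\pm\,*\circ(\lambda\wedge\cdot)$ together with the degree shift $\alpha\wedge\beta\in\Lambda^{n-k}(\mathbb{R}^n)$; but because every relation I invoke is a vanishing statement, the signs are immaterial and present no real obstacle. (Equivalently, one could choose an orthonormal basis with $e_1$ parallel to $\lambda$, write $\beta=e_1\wedge\gamma$, and observe that $*(\alpha\wedge\beta)$ contains no $e_1$ while $c\beta$ is divisible by $e_1$ and nonzero, again a contradiction.)
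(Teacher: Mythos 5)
Your proof is correct and takes essentially the same approach as the paper: the paper writes $\beta=\lambda\wedge\beta'$ in an orthonormal basis extending $\lambda$ and observes that $*(\alpha\wedge\lambda\wedge\beta')$ contains no $\lambda$ while $a\beta$ is a nonzero multiple of $\lambda$ --- which is exactly your coordinate-free pair of relations $\iota_\lambda(T_\alpha\beta)=0$ and $\lambda\wedge\beta=0$ forcing $\beta=0$. Your closing parenthetical remark is, in fact, verbatim the paper's own proof.
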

 \begin{proof}
Fix a non-zero eigenvalue $a$ of $T_\alpha,$ assume $T_\alpha(\beta)=a\cdot\beta$ and $\beta=\lambda\cdot\beta'$ for some $\beta'\in\Lambda^{k-1}(\mathbb{R}^n)$ and $\lambda\in\Lambda^1(\mathbb{R}^n)$ with $|\lambda|=1.$ Extend $\lambda$ to an orthonormal basis $B=\{\lambda,e_2,e_3,\cdots,e_{n}\}$ for $\Lambda^1(\mathbb{R}^n).$ We can assume that the expression of $\beta'$ with respect to $B$ does not contain $\lambda.$ Then the expression of $*(\alpha\cdot\lambda\cdot\beta')$ does not contain $\lambda.$ This is a contradiction to $T_\alpha(\beta)=a\cdot\beta.$ 
 \end{proof}
 \begin{example}\label{exar4pri65}
The eigenspaces $\Lambda^\pm\subset\Lambda^{2n}(\mathbb{R}^{4n})$ of star operator $*:\Lambda^{2n}(\mathbb{R}^{4n})\rightarrow\Lambda^{2n}(\mathbb{R}^{4n})$ associated with $\pm 1$ are both prime.
 \end{example}
 The basic action of $\mathrm{SO}(n)$ on $\mathbb{R}^{n}$ induces a representation on $\Lambda^{*}(\mathbb{R}^{n})$ by
$$A(\omega^{i_{1}}\wedge \omega^{i_{2}}\wedge\cdots \wedge \omega^{i_{k}})=A\omega^{i_{1}}\wedge A\omega^{i_{2}}\wedge\cdots \wedge A\omega^{i_{k}}.$$ 
In particular, $\Lambda^{2}(\mathbb{R}^{n})$ is the adjoint representation of $\mathrm{SO}(n)$ by identifying $\Lambda^{2}(\mathbb{R}^{n})$ with $\mathfrak{so}(n)$. The Hodge star $*:\Lambda^{k}\rightarrow\Lambda^{n-k}$ commutes with $\mathrm{SO}(n).$ Let $G$ be a subgroup of $\mathrm{SO}(n).$ The real representation $\mathbb{R}^n$ of $G$ induces an action of $G$ on the unit sphere $S^{n-1}$ in $\mathbb{R}^n.$ We give a simple but useful lemma.
 \begin{lemma}\label{usefullempri5}
 Let $E\subset\Lambda^k(\mathbb{R}^n)$ be a $G$-invariant subspace. If $G$ acts on $S^{n-1}$ transitively, then $E$ is prime if and only if the multiplication $E\rightarrow\Lambda^{k+1}(\mathbb{R}^n)$ by $\lambda$ is injective for some $\lambda\in\Lambda^1(\mathbb{R}^n).$
 \end{lemma}
 \subsection{Complete prime space and pair in $\Lambda^*(\mathbb{R}^n)$}
Sometimes we denote $\Lambda^i(\mathbb{R}^n)$ by $\Lambda^i.$ For a linear subspace $E$ of $\Lambda^*,$ let $E\cdot\Lambda^i$ denote the linear subspace of $\Lambda^{*+i}$ generated by $\{xy|\ x\in E,y\in\Lambda^i\}.$ Analogously to (\ref{kprimcpm3}), we introduce the notion of complete prime to describe the principal symbols of (\ref{firellty1}) and (\ref{firellty2}).
\begin{definition}
Let $E\subset\Lambda^{k}(\mathbb{R}^{n})$ and $F\subset\Lambda^{k-1}(\mathbb{R}^{n})$ be two linear subspaces for $k\geq 2.$ Set $E_i=E\cdot\Lambda^i$ for $i=1,2,\cdots,r$ and $E\cdot\Lambda^{r+1}=\Lambda^{k+r+1}.$
\begin{itemize}
\item[(1)] We say that $E$ is \emph{complete prime} if the complex 
 \begin{eqnarray}\label{comprimc5}
0\rightarrow\Lambda^0\stackrel{\lambda}{\rightarrow}\Lambda^1\stackrel{\lambda}{\rightarrow} \cdots\stackrel{\lambda}{\rightarrow}\Lambda^{k-1}\stackrel{\lambda}{\rightarrow}\Lambda^{k}/E\stackrel{\lambda}{\rightarrow}\Lambda^{k+1}/E_1\stackrel{\lambda}{\rightarrow}\cdots\stackrel{\lambda}{\rightarrow}\Lambda^{k+r}/E_r\rightarrow0 
 \end{eqnarray}
is exact for any $\lambda\in\Lambda^{1}-\{0\}.$ Denote this complex by $(E^\bullet,\lambda).$
\item[(2)] We say that the pair $(F,E)$ is \emph{complete prime} if the complex 
 \begin{eqnarray}\label{comprimc6}
0\rightarrow F\stackrel{\lambda}{\rightarrow}\Lambda^{k}/E\stackrel{\lambda}{\rightarrow}\Lambda^{k+1}/E_1\stackrel{\lambda}{\rightarrow}\Lambda^{k+2}/E_2\stackrel{\lambda}{\rightarrow}\cdots\stackrel{\lambda}{\rightarrow}\Lambda^{k+r}/E_r\rightarrow 0
 \end{eqnarray}
 is exact for any $\lambda\in\Lambda^{1}-\{0\}.$ Denote this complex by $(F,E^\bullet,\lambda).$
\end{itemize}
\end{definition}
\begin{remark}
The dual complex of (\ref{comprimc5}) is given by 
\begin{eqnarray}\label{comprimc8}
0\rightarrow E\stackrel{\lambda}{\rightarrow}E_1\stackrel{\lambda}{\rightarrow}E_2\stackrel{\lambda}{\rightarrow}\cdots\stackrel{\lambda}{\rightarrow}E_r\stackrel{\lambda}{\rightarrow}\Lambda^{k+r+1}\stackrel{\lambda}{\rightarrow}\cdots\stackrel{\lambda}{\rightarrow}\Lambda^n\rightarrow 0
\end{eqnarray}
which is the principal symbols of (\ref{firsellty3}). One can check that with the same generator  (\ref{comprimc5}) is exact if and only if (\ref{comprimc8}) is exact.
\end{remark}
\begin{example}
Consider $\mathbb{R}^{5}$ with an inner product. Fix a unit vector $u\in\mathbb{R}^{5}$ and let $W=\langle u\rangle^\perp.$ Then the eigenspaces $E=\Lambda^\pm(W)$ of star operator on $\Lambda^{2}(W)$ are both complete prime, i.e., the sequence $(E^\bullet,\lambda)$
\begin{eqnarray}\label{r5ellpcx}
0\rightarrow\Lambda^0\rightarrow\Lambda^1\rightarrow\Lambda^2/E\rightarrow\Lambda^3/(E\cdot\Lambda^1)\rightarrow 0
\end{eqnarray}
is exact. To see this, consider its alternating sum of dimensions. Due to $\mathrm{dim}\,E\cdot\Lambda^1=7,$ one can see that 
\begin{eqnarray}\label{dimforexam65}
\mathrm{dim}\,\Lambda^0-\mathrm{dim}\,\Lambda^1+(\mathrm{dim}\,\Lambda^2-\mathrm{dim}\,E)-(\mathrm{dim}\,\Lambda^3-\mathrm{dim}\,E\cdot\Lambda^1)=0.
\end{eqnarray}
Since $E$ is prime, (\ref{r5ellpcx}) is exact at $\Lambda^1.$ In addition, for any $\lambda\in\Lambda^1-\{0\}$ and $\alpha\in\Lambda^2,$ if $\lambda\cdot\alpha\in E\cdot\Lambda^1,$ then there exists $x\in E$ and $y\in\Lambda^1$ such that $\alpha=\lambda\cdot y+x.$ This means (\ref{r5ellpcx}) is exact at $\Lambda^2/E.$ It follows from (\ref{dimforexam65}) that (\ref{r5ellpcx}) is exact at $\Lambda^3/(E\cdot\Lambda^1).$
\end{example}
We next consider an equivalence relation between subspaces of exterior algebras. Suppose $V_1$ and $V_2$ are two linear spaces and $F_i\subset\Lambda^*(V_i),i=1,2$ are two linear subspaces. We say that $F_1$ is isomorphic to $F_2$ and denote by $F_1\cong F_2$ if there exists an isomorphism of algebras $ f:\Lambda^*(V_1)\rightarrow\Lambda^*(V_2)$ such that $F_2= f(F_1).$   
\begin{lemma}
Let $f$ be an isomorphism from $\Lambda^*(V)$ to $\Lambda^*(W).$ Then a subspace $E$ (resp. pair $(F,E)$) of $\Lambda^*(V)$ is complete prime if and only if $f(E)$ (resp. pair $(f(F),f(E))$) in $\Lambda^*(W)$ is complete prime.
\end{lemma}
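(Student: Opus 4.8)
The plan is to exhibit $f$ as an isomorphism of cochain complexes carrying the defining complex of $E$ onto that of $f(E)$, so that exactness is transported for free. First I would record the only structural input needed: since $f$ is an isomorphism of exterior algebras it respects the $\mathbb{Z}$-grading and is the extension to $\Lambda^*(V)$ of the linear isomorphism $f_1:=f|_{\Lambda^1(V)}\colon\Lambda^1(V)\to\Lambda^1(W)$ (an algebra map out of $\Lambda^*(V)$ is determined by its restriction to the generators $\Lambda^1(V)$). In particular $f(\Lambda^j(V))=\Lambda^j(W)$ for every $j$, and for any two subspaces $A,B$ one has $f(A\cdot B)=f(A)\cdot f(B)$ since $f$ is multiplicative.

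From this multiplicativity the entire combinatorial data of the definition is transported. Writing $E_i=E\cdot\Lambda^i$, I get $f(E_i)=f(E)\cdot\Lambda^i(W)=f(E)_i$, and the truncation index is preserved because $E\cdot\Lambda^{r+1}=\Lambda^{k+r+1}$ forces $f(E)\cdot\Lambda^{r+1}(W)=\Lambda^{k+r+1}(W)$. Now fix $\lambda\in\Lambda^1(V)\setminus\{0\}$ and put $\mu:=f_1(\lambda)\in\Lambda^1(W)\setminus\{0\}$. On each term $f$ is an isomorphism $\Lambda^j(V)\to\Lambda^j(W)$ in low degrees and descends to an isomorphism $\Lambda^{k+i}(V)/E_i\xrightarrow{\ \sim\ }\Lambda^{k+i}(W)/f(E)_i$ in the quotient degrees, while the relation $f(\lambda\cdot x)=\mu\cdot f(x)$ shows that $f$ intertwines multiplication by $\lambda$ with multiplication by $\mu$. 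Hence $f$ is an isomorphism of cochain complexes from $(E^\bullet,\lambda)$ onto $(f(E)^\bullet,\mu)$; in particular one is exact if and only if the other is.

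It then remains to handle the quantifier ``for all $\lambda$''. Because $f_1$ is a bijection $\Lambda^1(V)\setminus\{0\}\to\Lambda^1(W)\setminus\{0\}$, the correspondence $\lambda\leftrightarrow\mu=f_1(\lambda)$ identifies the two families of complexes, so $(E^\bullet,\lambda)$ is exact for every nonzero $\lambda$ precisely when $(f(E)^\bullet,\mu)$ is exact for every nonzero $\mu$; by the definition of complete prime this says exactly that $E$ is complete prime if and only if $f(E)$ is. The pair statement is identical word for word: using the complex \eqref{comprimc6} in place of \eqref{comprimc5}, the same $f$ carries $(F,E^\bullet,\lambda)$ isomorphically onto $(f(F),f(E)^\bullet,\mu)$, and the same bijection of nonzero $1$-forms finishes the argument.

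As for the expected obstacle, there is no analytic or geometric difficulty here; the content is entirely formal once $f$ is known to be degree-preserving and multiplicative on subspace products, after which the chain-isomorphism argument is routine. The single point deserving care is precisely the first step, namely justifying that the algebra isomorphism $f$ preserves the grading and hence arises from $f_1$: a priori a filtered, non-graded automorphism could send a homogeneous $E\subset\Lambda^k$ outside $\Lambda^k$, in which case ``$f(E)$ is complete prime'' would not even be meaningful. I would therefore either prove degree-preservation directly or restrict to the graded isomorphisms, which is all that is used later and is in any case forced by the requirement that the image $f(F_1)=F_2$ be homogeneous.
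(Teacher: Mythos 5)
Your proof is correct and is essentially the paper's own argument: $f$ descends to isomorphisms $\Lambda^{k+i}(V)/(E\cdot\Lambda^i(V))\to\Lambda^{k+i}(W)/(f(E)\cdot\Lambda^i(W))$ that intertwine multiplication by $\lambda$ with multiplication by $f(\lambda)$, so $(E^\bullet,\lambda)$ and $(f(E)^\bullet,f(\lambda))$ are isomorphic cochain complexes and exactness transports, with the bijection $\lambda\mapsto f(\lambda)$ on nonzero one-forms handling the quantifier and the pair case running verbatim. Your closing caveat is the one genuine subtlety the paper glosses over: degree preservation cannot be proved for an arbitrary algebra isomorphism (e.g.\ $e_1\mapsto e_1+e_1\wedge e_2\wedge e_3$, $e_2\mapsto e_2$, $e_3\mapsto e_3$ extends to a non-graded automorphism of $\Lambda^*(\mathbb{R}^3)$), so of your two proposed fixes only restricting to graded isomorphisms is viable, and this is harmless since every isomorphism the paper later invokes (group actions, parallel transport) is induced by a linear map $V\to W$.
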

\begin{proof}
The isomorphism $f$ induces a family of isomorphisms 
$$ f_i:\Lambda^{k+i}(V)/(E\cdot\Lambda^i(V))\rightarrow\Lambda^{k+i}(W)/(f(E)\cdot\Lambda^i(W)).$$ 
Then for every $\lambda\in\Lambda^1(V)-\{0\},$ we have an isomorphism between complexes 
$$\xymatrix{
&\cdots \ar[r]^-{\lambda}  &\Lambda^{k-1}(V)  \ar[d]_f \ar[r]^-{\lambda} &\Lambda^k(V)/E \ar[d]_{ f_0} \ar[r]^-{\lambda} &\Lambda^{k+1}(V)/(E\cdot\Lambda^1(V)) \ar[d]_{ f_1} \ar[r]^-{\lambda} &\cdots\\
 &\cdots \ar[r]^-{ f(\lambda)} &\Lambda^{k-1}(W) \ar[r]^-{ f(\lambda)} &\Lambda^k(W)/ f(E) \ar[r]^-{ f(\lambda)} &\Lambda^{k+1}(W)/( f(E)\cdot\Lambda^1(W)) \ar[r]^-{ f(\lambda)} &\cdots.
}$$
The two families of homology groups are isomorphism respectively. Hence, $(E^\bullet,\lambda)$ is exact if and only if $( f(E)^{\bullet}, f(\lambda))$ is exact. Same proof for the pair $(E,F).$
\end{proof}
\begin{corollary}\label{coroimpcp}
Let $G$ be a subgroup of $\mathrm{SO}(n)$ and let $E,F$ be two $G$-invariant subspaces of $\Lambda^*(\mathbb{R}^n).$ If $G$ acts on $S^{n-1}$ transitively, then $E$ (resp. $(F,E)$) is complete prime if and only if $(E^\bullet,\lambda)$ (reps. $(F,E^\bullet,\lambda)$) is exact for some $\lambda\in\Lambda^1(\mathbb{R}^n)-\{0\}.$ 
\end{corollary}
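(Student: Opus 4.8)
The forward implication is immediate from the definition: if $E$ is complete prime then $(E^\bullet,\lambda)$ is exact for \emph{every} $\lambda\in\Lambda^1(\mathbb{R}^n)-\{0\}$, hence in particular for at least one such $\lambda$. The content is the converse, and the plan is to propagate exactness at a single covector to all covectors, using the two hypotheses — $G$-invariance of $E$ and transitivity of $G$ on $S^{n-1}$ — together with the isomorphism lemma proved just above.

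First I would record that exactness of $(E^\bullet,\lambda)$ depends only on the ray $\mathbb{R}_{>0}\lambda$. Indeed, replacing $\lambda$ by $c\lambda$ with $c>0$ multiplies each differential $x\mapsto\lambda\cdot x$ by the nonzero scalar $c$, which alters neither kernels nor images and hence leaves every homology group unchanged. (Equivalently, scaling by $c$ is an algebra automorphism of $\Lambda^*(\mathbb{R}^n)$ that preserves the subspace $E$, so the isomorphism lemma applies directly.) Thus it suffices to establish exactness of $(E^\bullet,\lambda)$ for every \emph{unit} covector $\lambda$.

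Next I would bring in the $G$-action. Each $g\in G\subset\mathrm{SO}(n)$ acts on $\mathbb{R}^n$ and extends to an algebra automorphism $g\colon\Lambda^*(\mathbb{R}^n)\to\Lambda^*(\mathbb{R}^n)$. Applying the isomorphism lemma with $f=g$ shows that $(E^\bullet,\lambda)$ is exact if and only if $(g(E)^\bullet,g(\lambda))$ is exact; since $E$ is $G$-invariant we have $g(E)=E$, so
\[
(E^\bullet,\lambda)\ \text{is exact}\quad\Longleftrightarrow\quad (E^\bullet,g(\lambda))\ \text{is exact}.
\]
Now suppose $(E^\bullet,\lambda_0)$ is exact for some $\lambda_0\neq 0$; by the ray reduction we may normalize $|\lambda_0|=1$. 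Given an arbitrary unit covector $\lambda$, transitivity of $G$ on $S^{n-1}$ furnishes $g\in G$ with $g(\lambda_0)=\lambda$, whence $(E^\bullet,\lambda)$ is exact. Together with the ray reduction this yields exactness of $(E^\bullet,\lambda)$ for all $\lambda\neq 0$, i.e. $E$ is complete prime.

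The statement for a pair $(F,E)$ is proved verbatim: $F$ and $E$ are both $G$-invariant, the pair version of the isomorphism lemma gives that $(F,E^\bullet,\lambda)$ is exact iff $(f(F),f(E)^\bullet,f(\lambda))$ is exact, and the same ray-plus-transitivity argument transports exactness at one covector to all of them. I do not expect a genuine obstacle here; the only points demanding care are the bookkeeping that the $G$-action is an algebra automorphism \emph{fixing} $E$ (and $F$), so that the isomorphism lemma is legitimately applicable with $g(E)=E$, and that length normalization is harmless because positive scaling merely rescales the differentials.
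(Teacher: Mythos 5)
Your proof is correct and is essentially the paper's own (implicit) argument: the corollary is stated as an immediate consequence of the preceding isomorphism lemma, whose proof establishes exactly the per-$\lambda$ equivalence you invoke, combined with $G$-invariance, transitivity on $S^{n-1}$, and the harmless scaling normalization. Nothing is missing.
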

\begin{definition}
Let $M$ be a smooth manifold.  A subbundle $E$ of $\Lambda^{k}(M)$ is \emph{perfect} if $E_x\cong E_y$ for all $x,y\in M.$ Further, $E$ is prime (resp. complete prime) if $E$ is perfect and $E_x\subset\Lambda^{k}(T_x^*M)$ is prime (resp. complete prime) for some $x\in M.$ A bundle pair $(F,E)$ is complete prime if $E,F$ are both perfect and $(F_x,E_x)$ is complete prime for some $x\in M.$
\end{definition}
\begin{remark}
In general, the rank of $E\cdot\Lambda^r$ depends on the algebraic structure of $E.$ If $E$ is perfect, then $E\cdot\Lambda^r(M)$ is also perfect. As a result, $E\cdot\Lambda^r(M)$ is a subbundle of $\Lambda^*(M).$
\end{remark}
\section{Dolbeault-type complexes}\label{pdrcdef5}
Let $(M,g)$ be a Riemannian manifold with a metric $g.$ Suppose that $E$ is a subbundle of $\Lambda^{k}(M),2\leq k\leq n-2.$ There is an orthogonal decomposition $\Lambda^{k}(M)=E\oplus E^{\perp}$ with respect to $g.$ Let $p$ be the projection from $\Lambda^{k}(M)$ to $E^{\perp}.$ For the ease of notations, we use $\Lambda^k$ for $\Lambda^k(M)$ in this section. We will seek the elliptic conditions for complexes (\ref{firellty1}) and (\ref{firellty2}). 

Let $G$ be a subgroup of $\mathrm{SO}(n)$ with a Lie algebra $\mathfrak{g}.$ Recall from \cite{Chern1966} that a $G$-structure $Q$ on $M$ is integrable if it is locally isomorphic to a flat $G$-structure, i.e., for every point $x\in M,$ there is an open set $U$ of $x$ such that $Q|_U$ has a local section $(V_1,\cdots,V_n)$ consisting of commuting vector fields. Obviously, if $Q$ is integrable then the associated bundle $\Lambda(\mathfrak{g}):=Q\times_G\mathfrak{g}$ is locally flat, i.e., for every $x\in M,$ there exists an open set $U$ of $x$ and closed forms $\alpha_1,\cdots,\alpha_s$ as a $C^\infty(U)$-basis for $\Gamma(U,\Lambda(\mathfrak{g})).$ We introduce a  concept of locally quasi-flat which is weaker than locally flat.
\begin{definition}
We say that a subbundle $F$ of $\Lambda^k(M)$ is \emph{locally quasi-flat} if 
$\mathrm{d}\Gamma(M,F)\subset\Gamma(M,F\cdot\Lambda^1).$
\end{definition}
As a direct consequence of definition, we have 
\begin{lemma}\label{requafla2}
If $F\subset\Lambda^k(M)$ is locally quasi-flat, then $F\cdot\Lambda^1$ is also locally quasi-flat. 
\end{lemma}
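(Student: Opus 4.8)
The plan is to unwind the definition directly: I must show that every section $\beta$ of $F\cdot\Lambda^1$ satisfies $d\beta\in\Gamma(M,(F\cdot\Lambda^1)\cdot\Lambda^1)$. The first observation is that $(F\cdot\Lambda^1)\cdot\Lambda^1=F\cdot\Lambda^2$, because products of pairs of $1$-forms span $\Lambda^2$; thus the target subbundle is simply $F\cdot\Lambda^2$. Since $d$ is a local operator and the quasi-flat condition is tested section by section, it suffices to verify $d\beta\in\Gamma(U,F\cdot\Lambda^2)$ on each member $U$ of an open cover over which $F$ is trivialized.

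On such a $U$, choose a local frame $\alpha_1,\dots,\alpha_s$ for $F$ and a local coframe $e^1,\dots,e^n$ for $\Lambda^1$. The products $\alpha_i\cdot e^j$ span the fibers of $F\cdot\Lambda^1$ over $U$, so any section $\beta$ of $F\cdot\Lambda^1$ over $U$ can be written as $\beta=\sum_{i,j} f_{ij}\,\alpha_i\cdot e^j$ with $f_{ij}\in C^\infty(U)$. Applying the Leibniz rule produces $d\beta$ as a sum of three kinds of terms: $df_{ij}\cdot\alpha_i\cdot e^j$, the terms $f_{ij}\,(d\alpha_i)\cdot e^j$, and the terms $\pm f_{ij}\,\alpha_i\cdot de^j$.

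Each kind lands in $\Gamma(U,F\cdot\Lambda^2)$. For the first and third, note that $df_{ij}\cdot e^j$ and $de^j$ are $2$-forms, and after reordering each such term equals, up to sign, $\alpha_i$ wedged with a $2$-form, hence lies in $F\cdot\Lambda^2$ since $\alpha_i\in\Gamma(F)$. For the middle term I invoke the hypothesis that $F$ is locally quasi-flat: $d\alpha_i\in\Gamma(F\cdot\Lambda^1)$, so $(d\alpha_i)\cdot e^j\in\Gamma(F\cdot\Lambda^1\cdot\Lambda^1)=\Gamma(F\cdot\Lambda^2)$. Summing over $i,j$ gives $d\beta\in\Gamma(U,F\cdot\Lambda^2)$, which is precisely the locally quasi-flat condition for $F\cdot\Lambda^1$.

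The core computation is a routine application of the Leibniz rule, so I do not anticipate a genuine obstacle. The only points demanding care are bookkeeping rather than substance: justifying the reduction to local sections over a trivializing cover (so that a frame $\alpha_1,\dots,\alpha_s$ of $F$ is available and $\{\alpha_i\cdot e^j\}$ spans the fibers of $F\cdot\Lambda^1$), and the identity $(F\cdot\Lambda^1)\cdot\Lambda^1=F\cdot\Lambda^2$, which is what allows the argument to close entirely within subbundles generated by $F$.
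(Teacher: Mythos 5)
Your proof is correct: the paper gives no proof at all (the lemma is stated as ``a direct consequence of definition''), and your Leibniz-rule computation is exactly the argument being left implicit, with the identity $(F\cdot\Lambda^1)\cdot\Lambda^1=F\cdot\Lambda^2$ and the term-by-term check doing the work. The only details you wave at rather than carry out --- extending the local frame sections $\alpha_i$ by bump functions so that the hypothesis (stated for global sections) applies to them, and using the implicit constant-rank assumption on $F\cdot\Lambda^1$ to write $\beta$ as a smooth combination of the spanning set $\{\alpha_i\cdot e^j\}$ --- are standard and do not affect correctness.
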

Let $E$ be a locally quasi-flat subbundle of $\Lambda^k(M)$ and we define a list of subbundles $E_i\subset\Lambda^{k+i}$ by $E_i=E\cdot\Lambda^i.$ Lemma \ref{requafla2} shows that every $E_i$ is locally quasi-flat. We claim that the differential in (\ref{firellty1}) and (\ref{firellty2}) satisfy $D_{i+1}D_i=0.$ To see this, for $\alpha\in\Gamma(M,E_{i-1}^\perp),$ set $\mathrm{d}\alpha=u_1+u_2$ with $u_1\in\Gamma(M,E_{i})$ and $u_2\in\Gamma(M,E_{i}^\perp).$ Using $\mathrm{d}u_1+\mathrm{d}u_2=0,$ we have 
$\mathrm{d}u_2=-\mathrm{d}u_1\in\Gamma(M,E_i\cdot\Lambda^1)=\Gamma(M,E_{i+1}).$ Then 
$D_{i+1}D_i\alpha=p_{i+1}\mathrm{d}p_i \mathrm{d}\alpha=p_{i+1}\mathrm{d}u_2=0.$ 

A simple calculation shows that (\ref{comprimc5}) and (\ref{comprimc6}) are principal symbols of  (\ref{firellty1}) and (\ref{firellty2}) respectively. According to the elliptic complex theory \cite{AtiyahBott1967}, a complex is elliptic if and only if the sequence of its principal symbols is exact outside the zero section. So far we obtain the following theorem.
\begin{theorem}\label{theo2basicec}
Suppose $M$ is a closed Riemannian manifold, $E$ is a locally quasi-flat subbundle of $\Lambda^k(M)$ and $F$ is a subbundle of $\Lambda^{k-1}(M).$ If $E$ is complete prime, then $(E,D)$ is an elliptic complex. Similarly, if $(F,E)$ is complete prime, then $(F,E,D)$ is an elliptic complex.
\end{theorem}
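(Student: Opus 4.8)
The plan is to read off ellipticity from the Atiyah--Bott criterion \cite{AtiyahBott1967}, by which a complex of differential operators is elliptic exactly when its symbol sequence is exact off the zero section of $T^*M$. Two of the three ingredients are already in hand. The complex identity $D_{i+1}D_i=0$ was verified above from the locally quasi-flat hypothesis together with Lemma \ref{requafla2}, so $(E,D)$ and $(F,E,D)$ are genuine cochain complexes; and the notion of complete prime was tailored precisely so that the fiberwise complexes (\ref{comprimc5}) and (\ref{comprimc6}) are the sequences whose exactness is at issue. What remains is to identify the principal symbol of the complex with the relevant complete prime complex, and then to upgrade the single-fiber hypothesis to a statement valid at every point.

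For the symbol, fix $x\in M$ and $0\neq\xi\in T_x^*M$. Each operator is $D_i=p_i\circ d$, with $p_i$ the orthogonal projection onto $E_i^\perp$, a bundle map of order zero. Since the principal symbol of $d$ at $\xi$ is $\alpha\mapsto\xi\wedge\alpha$ (up to the usual nonzero scalar) and principal symbols compose, the symbol of $D_i$ at $\xi$ is $\alpha\mapsto p_i(\xi\wedge\alpha)$. I would then use that orthogonal projection furnishes a linear isomorphism $E_i^\perp\cong\Lambda^{k+i}(T_x^*M)/E_{i,x}$ carrying $p_i(\xi\wedge\cdot)$ to the quotient map $[\beta]\mapsto[\xi\wedge\beta]$. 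Under this identification the symbol sequence of $(E,D)$ at $\xi$ becomes the algebraic complex $(E_x^\bullet,\xi)$ of (\ref{comprimc5})---its initial segment $\Lambda^0\to\cdots\to\Lambda^{k-1}$ matching the pure $d$ part of (\ref{firellty1})---and the symbol sequence of $(F,E,D)$ becomes $(F_x,E_x^\bullet,\xi)$ of (\ref{comprimc6}).

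It then remains to check these are exact at every $x$ and every $\xi\neq0$. By hypothesis $E$ (resp. $(F,E)$) is complete prime as a subbundle, which by definition means perfectness together with complete primeness of the fiber at a single point $x_0$. Here I invoke the invariance of the complete prime property under algebra isomorphisms established earlier: since perfectness supplies, for each $y$, an algebra isomorphism sending $E_{x_0}$ to $E_y$ (and $F_{x_0}$ to $F_y$), complete primeness transfers to every fiber. The definition of complete prime already demands exactness of (\ref{comprimc5}) (resp. (\ref{comprimc6})) for all $\lambda\in\Lambda^1-\{0\}$, so the symbol sequence is exact at every nonzero $\xi$, i.e. off the zero section, and Atiyah--Bott yields ellipticity.

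The step deserving the most care is the symbol identification of the second paragraph: one must confirm that the order-zero projection $p_i$ contributes only itself to the leading symbol, with no lower-order interference, and that $E_i^\perp$ is canonically identified with $\Lambda^{k+i}/E_i$ so that the analytic and algebraic complexes coincide on the nose rather than merely up to homotopy. The propagation across points is the conceptually crucial bridge---ellipticity is a condition at all points, whereas complete primeness is posited at one---so it is worth recording explicitly that perfectness is exactly what closes this gap.
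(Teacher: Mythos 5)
Your proposal is correct and takes essentially the same route as the paper: the paper likewise derives $D_{i+1}D_i=0$ from local quasi-flatness (in the paragraph preceding the theorem), identifies (\ref{comprimc5}) and (\ref{comprimc6}) as the principal symbol sequences of (\ref{firellty1}) and (\ref{firellty2}) (dismissed there as ``a simple calculation''), and concludes by the Atiyah--Bott criterion that exactness of the symbol sequence off the zero section is equivalent to ellipticity. Your explicit symbol computation and the propagation of complete primeness from one fiber to all fibers via perfectness and the isomorphism-invariance lemma are exactly the details the paper leaves implicit, so this is the same proof, merely written out in full.
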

Specially, we have 
\begin{corollary}\label{theoellipcon}
Let $M$ be a closed Riemannian $n$-manifold and $E$ a subbundle of $\Lambda^k(M).$ Then the complex 
\begin{eqnarray}\label{sderham03}
0\longrightarrow A^{0}(M)\stackrel{\mathrm{d}}{\longrightarrow}A^{1}(M)\stackrel{\mathrm{d}}{\longrightarrow}\cdots\stackrel{\mathrm{d}}{\longrightarrow}A^{k-1}(M)\stackrel{p\mathrm{d}}{\longrightarrow}\Gamma(M,E^{\perp})\longrightarrow 0
\end{eqnarray}
 is elliptic if and only if $\mathrm{rank}\,E=\sum_{i=0}^{k}(-1)^{k+i}\binom{n}{i}$ and $E$ is prime.
\end{corollary}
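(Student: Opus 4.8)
The plan is to reduce the statement to a pointwise fact about the principal symbol, since the analytic content is already supplied by the Atiyah--Bott criterion: a complex of differential operators is elliptic if and only if its symbol sequence is exact off the zero section. First I would note that (\ref{sderham03}) is automatically a complex, requiring no extra hypothesis: the interior arrows are $d$ with $d^2=0$, and the only projected arrow is the last one $A^{k-1}\stackrel{pd}{\rightarrow}\Gamma(E^\perp)$, for which the preceding composition is $pd\circ d=p\circ d^2=0$. In particular no local quasi-flatness is needed here, so this is not a literal corollary of Theorem \ref{theo2basicec} but a sharper, self-contained statement in the terminal case $r=0$.

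Next I would compute the symbol at a point $x\in M$ and a nonzero covector $\lambda\in\Lambda^1(T_x^*M)$. The symbol of $d$ is $\lambda\cdot$ and that of $pd$ is $p\circ(\lambda\cdot)$; using the canonical identification $E_x^\perp\cong\Lambda^k/E_x$ the symbol sequence of (\ref{sderham03}) becomes
$$0\rightarrow\Lambda^0\stackrel{\lambda}{\rightarrow}\Lambda^1\stackrel{\lambda}{\rightarrow}\cdots\stackrel{\lambda}{\rightarrow}\Lambda^{k-1}\stackrel{\lambda}{\rightarrow}\Lambda^k/E_x\rightarrow0,$$
which is exactly the complex (\ref{comprimc5}) truncated at $r=0$. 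Thus ellipticity is equivalent to the exactness of this sequence for every $x$ and every $\lambda\neq0$, and I would establish exactness spot by spot. Since the full Koszul complex $(\Lambda^\bullet,\lambda\cdot)$ is exact for $\lambda\neq0$, the symbol sequence is automatically exact at $\Lambda^0,\dots,\Lambda^{k-2}$ with no condition on $E$. At $\Lambda^{k-1}$ the relevant three-term piece is precisely (\ref{kprimcpm3}) with $\Lambda^k$ replaced by $\Lambda^k/E$, so by the equivalence recorded just after (\ref{kprimcpm3}) this spot is exact for all $\lambda\neq0$ if and only if $E_x$ is prime.

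It remains to handle the final spot $\Lambda^k/E$. Once every earlier spot is known to be exact, the alternating sum of dimensions of the sequence equals $(-1)^k$ times the dimension of its only possibly-nonzero cohomology, namely that at the last term; hence exactness at the last spot is equivalent to the vanishing of this Euler characteristic. Writing that vanishing out,
$$\sum_{i=0}^{k-1}(-1)^i\binom{n}{i}+(-1)^k\Big(\binom{n}{k}-\mathrm{rank}\,E\Big)=0,$$
and rearranging gives exactly $\mathrm{rank}\,E=\sum_{i=0}^{k}(-1)^{k+i}\binom{n}{i}$. Combining the three steps yields both directions: if (\ref{sderham03}) is elliptic, then exactness at $\Lambda^{k-1}$ forces $E$ prime and exactness of the whole sequence forces the rank identity; conversely, primeness supplies exactness at $\Lambda^{k-1}$, Koszul supplies the earlier spots, and the rank identity then forces exactness at the last spot, so the symbol is exact everywhere and the complex is elliptic.

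The genuinely routine ingredients are Koszul exactness and the dimension bookkeeping. The one place to proceed with care is the last-spot argument, where I must verify that \emph{all} strictly earlier cohomologies vanish before reading the final one off the Euler characteristic; and, in the forward direction, that it is exactness at $\Lambda^{k-1}$ for \emph{every} nonzero $\lambda$ (not merely one) that delivers primeness, via the converse half of the equivalence following (\ref{kprimcpm3}).
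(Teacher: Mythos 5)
Your proposal is correct and follows essentially the same route as the paper: reduce to the pointwise symbol sequence via the Atiyah--Bott criterion, get exactness at $\Lambda^i$ for $i\leq k-2$ from Koszul exactness, identify exactness at $\Lambda^{k-1}$ for all $\lambda\neq 0$ with primeness of $E$ via the equivalence recorded after (\ref{kprimcpm3}), and settle the final spot by the vanishing of the alternating sum of dimensions, which is exactly the stated rank identity. Your added remark that $pd\circ d=0$ holds automatically (so no quasi-flatness enters) is a correct observation that the paper leaves implicit, but it does not change the argument.
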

\begin{proof}
Fix a point $x\in M.$ For any $\lambda\in\Lambda^1(T_x^*M)-\{0\},$ consider the complex 
\begin{eqnarray}\label{spec56f}
0\longrightarrow \Lambda^0\stackrel{\lambda}{\longrightarrow}\Lambda^1\stackrel{\lambda}{\longrightarrow}\cdots\stackrel{\lambda}{\longrightarrow}\Lambda^{k-1}\stackrel{p(\lambda\cdot\ )}{\longrightarrow}E^\perp\rightarrow 0.
\end{eqnarray}
It is obvious that (\ref{spec56f}) is exact at $\Lambda^i$ for $i\leq k-2.$ By (\ref{kprimcpm3}) we know that (\ref{spec56f}) is exact at $\Lambda^{k-1}$ and $E^\perp$ for all $\lambda$ if and only if $E$ is prime and 
$$(-1)^{k}\mathrm{rank}\,E^{\perp}+\sum_{i=0}^{k-1}(-1)^{i}\mathrm{rank}\,\Lambda^{i}(M)=0.$$
\end{proof}
\begin{example}\label{examplesign}
Consider a closed, oriented Riemannian $4n$-manifold $(M,g)$. The Hodge star $*$ operator induces a splitting of the space of $2n$-forms $\Lambda^{2n}=\Lambda^+\oplus\Lambda^-$ into the subspace of self-dual forms $A^+$ and anti-self-dual forms $A^-.$ According to Example \ref{exar4pri65}, $\Lambda^+$ and $\Lambda^-$ are both prime. In addition, 
$$\mathrm{rank}\,\Lambda^\pm=\frac{1}{2}\binom{4n}{2n}=\sum_{i=0}^{2n}(-1)^i\binom{4n}{i}.$$  Apply Corollary \ref{theoellipcon}, the complex
 $$C_\pm:A^{0}(M)\stackrel{\mathrm{d}}{\longrightarrow}A^{1}(M)\stackrel{\mathrm{d}}{\longrightarrow}\cdots\stackrel{\mathrm{d}}{\longrightarrow}A^{2n-1}(M)\stackrel{p\mathrm{d}}{\longrightarrow}A^\pm\rightarrow 0$$ 
 generated by $\Lambda^\mp$ is elliptic. Obviously, the $2n$-th cohomology of $C_\pm$ is $H^\pm(M).$ Since $\mathrm{d}\alpha\in A^\pm$ implies $\mathrm{d}\alpha=0,$ the $(2n-1)$-th cohomology of $C_\pm$ is $H^{2n-1}(M).$
\end{example}

\section{Complexes and representations}\label{lastsectcr}
Inspired by D. Salamon \cite{Salamon1989} and Reyes-Carrion \cite{Reyes1998}, we use representations of holonomy group to construct complete prime bundles. 

Let $G$ be a subgroup of $\mathrm{SO}(n)$ and $Q$ a $G$-structure on a closed $n$-manifold $M.$ Such a $G$-structure induces an orientation of $M$ and a Riemannian metric $g$ with holonomy group $\mathrm{Hol}(g)$ contained in $G.$ The basic action of $G$ on $\mathbb{R}^n$ induces a representation of $G$ on $\Lambda^*(\mathbb{R}^n).$ We write $\Lambda^k(\mathbb{R}^n)=\oplus_{i\in I_k}W_i^k,$ where each $W_i^k\subset\Lambda^k$ is an irreducible representation of $G$ and $I_k$ is a finite indexing set. The Hodge star $*$ gives an isometry between $W_i^k$ and $W_i^{n-k}.$ Let $\Lambda_i^k=Q\times_G W_i^k$ be the subbundle of $\Lambda^k(M)$ corresponding to $W_i^k.$ Then there is a natural decomposition \cite[Proposition 3.5.1]{Joyce2007}
\begin{eqnarray}\label{decombunp}
\Lambda^*(M)=\bigoplus_{k=0}^n\bigoplus_{i\in I_k}\Lambda_i^k.
\end{eqnarray}
We let $\pi_i$ denote the orthogonal projection from $\Lambda^k$ to $\Lambda_i^k,$ and let $\pi_{i,j}$ denote the orthogonal projection from $\Lambda^k$ to $\Lambda_i^k\oplus\Lambda_j^k.$ This notation will be used throughout the rest of this paper.

 In fact, each $\Lambda_i^k$ can be constructed by parallel translation. Let $\mathcal{E}$ be a $G$-invariant subspace of $\Lambda^k(\mathbb{R}^n).$ The vector bundle $\Lambda^*(M)$ carries a connection $\nabla$ induced by the Levi-Civita connection of $g.$ Fix a base point $x\in M$ and identify $T_x^*M$ with $\mathbb{R}^n.$
For each $y\in M$ and every path $\gamma:[0,1]\rightarrow M$ with $\gamma(0)=x,\gamma(1)=y,$ the parallel transport along $\gamma$ defines an isomorphism $P_\gamma:\Lambda^*(T_x^*M)\rightarrow\Lambda^*(T_y^*M).$ We claim that the subspace $P_\gamma(\mathcal{E})\subset\Lambda^k(T_y^*M)$ is independent of $\gamma.$ To see this, consider another path $\gamma'\in C^\infty([0,1],M)$ connecting $x$ and $y,$ the loop $\gamma'^{-1}\circ\gamma$ through $x$ determines a transformation $P_{\gamma'}^{-1}\circ P_\gamma\in\mathrm{Hol}(g)$ and hence $P_{\gamma'}^{-1}\circ P_\gamma(\mathcal{E})=\mathcal{E}$ which means $P_\gamma(\mathcal{E})=P_{\gamma'}(\mathcal{E}).$ Then we get a vector bundle $E=\bigcup_{y\in M}E_y,$ where $E_y=P_\gamma(\mathcal{E}).$

 Let $\{e_i\}$ be an orthogonal frame field on $M$ and $\{\omega^i\}$ the dual orthogonal frame field. The differential can be written as $\mathrm{d}=\sum_{i=1}^n \omega^i\wedge\nabla_{e_i}.$ It is important to note that $\nabla$ can reduce to a connection on $E$ since $\mathrm{Hol}(g)$ preserves $E.$ That is, for all $\xi\in\Gamma(E),$ we have $\nabla\xi\in\Gamma(E\otimes\Lambda^1).$ This implies that $E$ is locally quasi-flat. Besides, since parallel transport along a path induces an isomorphism between algebras, then $E$ (resp. $(F,E)$) is complete prime if and only if $\mathcal{E}$ (resp. $(\mathcal{F},\mathcal{E})$) is complete prime. Using Theorem \ref{theo2basicec}, we have the following theorem
 \begin{theorem}\label{theA}
Let $G$ a connected subgroup of $\mathrm{SO}(n)$ and let $(M,g,G,Q)$ be a closed Riemannian $n$-manifold with a compatible torsion-free $G$-structure $Q.$ Suppose $\mathcal{F}\subset\Lambda^{k-1}(\mathbb{R}^n)$ and $ \mathcal{E}\subset\Lambda^k(\mathbb{R}^n)$ are two invariant subspaces under the action of $G.$ Set $F=Q\times_G\mathcal{F}\subset\Lambda^{k-1}(M)$ and $E=Q\times_G\mathcal{E}\subset\Lambda^k(M).$ Then $E$ is locally quasi-flat. Consequently,
\begin{itemize}
\item[(\romannumeral1)] If $\mathcal{E}$ is complete prime, then the sequence $(E,D)$ is an elliptic complex. 
\item[(\romannumeral2)] If $(\mathcal{F},\mathcal{E})$ is complete prime, then the sequence $(F,E,D)$ is an elliptic complex. 
 \end{itemize}
\end{theorem}

We next discuss two cases $G=G_2$ and $G=\mathrm{Spin}(7).$ For more details on compact manifolds with exceptional holonomy, see \cite{Salamon1989} and \cite{Joyce2000}.
\subsection{Dolbeault complexes on $G_2$-manifold}
Let $(x_1,x_2,\cdots,x_7)$ be coordinates on $\mathbb{R}^7.$ For simplicity write $\omega^{ij\cdots l}$ to denote the product $\mathrm{d}x_i\wedge \mathrm{d}x_j\wedge\cdots\wedge \mathrm{d}x_l.$ Define a $G_2$-invariant $3$-form on $\mathbb{R}^7$ by 
$$\varphi_0=\omega^{123}+\omega^{145}+\omega^{167}+\omega^{246}-\omega^{257}-\omega^{347}-\omega^{356}.$$
It follows from \cite[Lemma 11.4]{Salamon1989} that the representation $\Lambda^*(\mathbb{R}^7)$ of $G_2$ splits orthogonally into components as 
\begin{eqnarray*}
&&\Lambda^1=\Lambda_7^1,\ \ \Lambda^2=\Lambda_7^2\oplus\Lambda_{14}^2,\ \ \Lambda^3=\Lambda_1^3\oplus\Lambda_7^3\oplus\Lambda_{27}^3,\\
&& \Lambda^4=\Lambda_1^4\oplus\Lambda_7^4\oplus\Lambda_{27}^4,\ \ \Lambda^5=\Lambda_7^5\oplus\Lambda_{14}^5,\ \ \Lambda^6=\Lambda_7^6.
\end{eqnarray*}
See also \cite{Bryant1987,DJoyce1996,Luis1998}. The prime forms $\varphi_0$ and $*\varphi_0$ induce canonical isomorphisms between $\Lambda_7^1,\Lambda_7^2,\cdots,\Lambda_7^6$ given by 
\begin{eqnarray}\label{canoisog27}
\Lambda_7^2=*(*\varphi_0\cdot\Lambda^1),\ \Lambda_7^3=*(\varphi_0\Lambda^1),\  \Lambda_7^4=\varphi_0\Lambda^1,\ \Lambda_7^5=*\varphi_0\cdot\Lambda^1,\ \Lambda_7^6=*\Lambda^1.
\end{eqnarray}
Moreover, $\Lambda_7^2$ and $\Lambda_{14}^2$ are the eigenspaces of the operator $T_{\varphi_0}=*(\varphi_0\wedge\cdot\ ):\Lambda^2\rightarrow\Lambda^2$ with eigenvalues $2$ and $-1$ respectively.
\begin{proposition}\label{g2relat355}
These spaces $\Lambda_i^k$ satisfy the following relations
\begin{eqnarray*}
&&\Lambda_7^2\cdot\Lambda^1=\Lambda^3,\ \ \Lambda_{14}^2\cdot\Lambda^1=\Lambda_{7}^3\oplus\Lambda_{27}^3,\\
&&\Lambda_7^3\cdot\Lambda^1=\Lambda^4,\ \ \Lambda_{27}^3\cdot\Lambda^1=\Lambda_7^4\oplus\Lambda_{27}^4,\\
&&\Lambda_7^4\cdot\Lambda^1=\Lambda_{27}^4\cdot\Lambda^1=\Lambda^5,\\ 
&&\Lambda_7^5\cdot\Lambda^1=\Lambda_{14}^5\cdot\Lambda^1=\Lambda_1^3\cdot\Lambda_7^3=\Lambda_1^4\cdot\Lambda_7^2=\Lambda^6,\\
&&\Lambda_1^3\cdot\Lambda_7^2=\Lambda_7^5,\ \ \Lambda_1^3\cdot\Lambda_{14}^2=\Lambda_{14}^5,\\
&&\Lambda_1^4\cdot\Lambda_{14}^2=\Lambda_1^3\cdot\Lambda_1^3=\Lambda_1^3\cdot\Lambda_{27}^3=0.
\end{eqnarray*}
\end{proposition}
We introduce an important lemma about $G_2.$ See \cite{Borel1949} or \cite{MontSame1943}. 
\begin{lemma}[{\cite[Theorem \uppercase\expandafter{\romannumeral3}]{Borel1949}}]\label{lemtransact}
Let $\mathbb{R}^7$ be the basic representation of $G_2.$ Then $G_2$ acts on $S^6$ transitively.
\end{lemma}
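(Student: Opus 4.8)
The plan is to prove transitivity through an orbit--stabilizer dimension count combined with a connectedness argument, which is the classical route underlying the cited theorem of Borel. Since $G_2$ is a compact connected Lie group of dimension $14$ acting smoothly on the unit sphere $S^6\subset\mathbb{R}^7$, every orbit is a compact embedded submanifold $G_2\cdot v\cong G_2/H_v$, where $H_v=\{g\in G_2:gv=v\}$ is the isotropy subgroup at $v$ (the orbit map $G_2\to S^6,\ g\mapsto gv$ factors through $G_2/H_v$ as an injective immersion, which is an embedding by compactness of $G_2$). To conclude transitivity it then suffices to exhibit a single $v\in S^6$ whose orbit has dimension $6=\dim S^6$: such an orbit is a $6$-dimensional embedded submanifold of the $6$-dimensional connected manifold $S^6$, hence simultaneously closed (by compactness) and open (a full-dimensional embedded submanifold is open), so it must be all of $S^6$.

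First I would compute the isotropy subgroup $H_v$ for a fixed unit vector $v$. Realizing $\mathbb{R}^7$ as the imaginary octonions $\mathrm{Im}\,\mathbb{O}$ and $G_2$ as $\mathrm{Aut}(\mathbb{O})$ --- equivalently, as the stabilizer of the invariant $3$-form $\varphi$, which satisfies $\varphi(x,y,z)=\langle x\times y,z\rangle$ for the induced cross product --- any $g\in H_v$ fixes $v$ and hence preserves both the line $\langle v\rangle$ and its orthogonal complement $W=v^\perp\cap\mathbb{R}^7$, a $6$-dimensional space. The key structural observation is that cross product with $v$ (equivalently, octonion multiplication by $v$, using $v^2=-1$) endows $W$ with a complex structure $J$, turning $W$ into a $3$-dimensional complex space, and that $g|_W$ is $J$-linear because $g(v\times w)=v\times g(w)$. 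Thus $H_v$ embeds into the unitary group $\mathrm{U}(W,J)\cong\mathrm{U}(3)$; invariance of $\varphi$ forces $g|_W$ to preserve in addition a complex volume form on $W$, so in fact $H_v\subseteq\mathrm{SU}(3)$. Conversely the standard embedding $\mathrm{SU}(3)\hookrightarrow G_2$ fixes $v$, giving $H_v=\mathrm{SU}(3)$, of dimension $8$.

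With $\dim H_v=8$ established, orbit--stabilizer yields $\dim(G_2\cdot v)=\dim G_2-\dim H_v=14-8=6$, and the connectedness argument of the first paragraph then finishes the proof.

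I expect the main obstacle to be the isotropy computation of the second paragraph, specifically verifying that $H_v$ is exactly $\mathrm{SU}(3)$ rather than merely a subgroup of $\mathrm{U}(3)$; this is where one must use the full strength of the octonionic (or $3$-form) structure, namely that an element of $G_2$ fixing $v$ preserves not only the induced Hermitian structure on $W$ but also the associated complex volume form. Everything else is either soft topology (compactness, openness of full-dimensional submanifolds, connectedness of $S^6$) or the standard bookkeeping $\dim G_2=14$ and $\dim\mathrm{SU}(3)=8$.
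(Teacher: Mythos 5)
Your argument is correct, but note that the paper does not prove this lemma at all: it is quoted as a known result from Borel's classification of compact Lie groups acting transitively on spheres (Theorem III of the cited 1949 paper, see also Montgomery--Samelson), so any genuine proof is necessarily a different route from the paper's. What you give is the standard direct argument: realize $G_2$ as $\mathrm{Aut}(\mathbb{O})$ acting on $\mathrm{Im}\,\mathbb{O}=\mathbb{R}^7$, show the isotropy group $H_v$ of a unit imaginary octonion $v$ lies in $\mathrm{SU}(3)$ (via the complex structure $J=v\times\cdot$ on $v^\perp$ and the complex volume form extracted from $\varphi$), and conclude by orbit--stabilizer together with the open-and-closed-in-connected argument. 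This is sound, and the one step you flag as delicate, $H_v\subseteq\mathrm{SU}(3)$, is indeed the crux; note, however, that you only need this inclusion and not the equality $H_v=\mathrm{SU}(3)$: since the orbit sits inside $S^6$ its dimension is automatically at most $6$, so $\dim H_v\geq 8$, while $H_v\subseteq\mathrm{SU}(3)$ gives $\dim H_v\leq 8$; the resulting equality of dimensions already makes the orbit open as well as closed, so the converse embedding $\mathrm{SU}(3)\hookrightarrow H_v$ can be dropped. Comparing the two routes: the paper's citation buys brevity and rests on Borel's general classification, which applies to all compact groups transitive on spheres, whereas your proof is self-contained modulo standard octonion identities and yields the sharper, more informative conclusion $S^6\cong G_2/\mathrm{SU}(3)$, which the bare transitivity statement does not record.
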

Using Lemma \ref{lemtransact} and Lemma \ref{usefullempri5}, we have
\begin{proposition}\label{propprisps6}
All $G_2$-invariant prime subspaces in $\Lambda^*(\mathbb{R}^7)$ are as follows 
$$\Lambda_7^2,\ \ \Lambda_{14}^2,\ \ \Lambda_1^3,\ \ \Lambda_7^3,\ \ \Lambda_1^3\oplus\Lambda_7^3,\ \ \Lambda_1^4.$$ 
\end{proposition}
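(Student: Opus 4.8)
The plan is to combine the transitivity reduction of Lemma~\ref{usefullempri5} with a single dimension count, and then to resolve by hand the finitely many candidates that survive. Because $G_2$ acts transitively on $S^6$ (Lemma~\ref{lemtransact}), Lemma~\ref{usefullempri5} shows that a $G_2$-invariant subspace $E\subset\Lambda^k(\mathbb{R}^7)$ is prime if and only if the single map $\omega^1\wedge\,\cdot\,:E\to\Lambda^{k+1}$ is injective. The kernel of $\omega^1\wedge\,\cdot\,$ on all of $\Lambda^k$ is $\omega^1\wedge\Lambda^{k-1}$, of dimension $\binom{6}{k-1}$, so injectivity forces $\dim E\le\binom{7}{k}-\binom{6}{k-1}=\binom{6}{k}$. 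Since $\binom{6}{k}=1,6,15,20,15,6,1$ for $k=0,\dots,6$ and the constituents in each degree are multiplicity-free (so every invariant subspace is a sum of the listed irreducibles), this necessary condition eliminates all invariant subspaces in degrees $1,5,6$, and leaves exactly $\Lambda_7^2,\Lambda_{14}^2$ in degree $2$; $\Lambda_1^3,\Lambda_7^3,\Lambda_1^3\oplus\Lambda_7^3$ in degree $3$; and $\Lambda_1^4,\Lambda_7^4,\Lambda_1^4\oplus\Lambda_7^4$ in degree $4$ (degree $0$ being the trivial scalar case). It then remains to decide primeness for these candidates.

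Four of them are immediate. By (\ref{canoisog27}) the spaces $\varphi\cdot\Lambda^1=\Lambda_7^4$ and $*\varphi\cdot\Lambda^1=\Lambda_7^5$ are $7$-dimensional, so $\mu\mapsto\varphi\wedge\mu$ and $\mu\mapsto *\varphi\wedge\mu$ are injective; hence $\varphi,*\varphi$ are prime forms and $\Lambda_1^3=\langle\varphi\rangle,\Lambda_1^4=\langle*\varphi\rangle$ are prime. For $\Lambda_7^2$, every nonzero element is $\iota_X\varphi$ for a unique $X\ne0$; the representative $\iota_{e_1}\varphi=\omega^{23}+\omega^{45}+\omega^{67}$ has rank $3$ and is prime by Example~\ref{lemzprimtwof2}, and transitivity propagates primeness to every $\iota_X\varphi$. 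Finally $\Lambda_{14}^2=\mathfrak{g}_2$ is prime because no nonzero element of $\mathfrak{g}_2\subset\mathfrak{so}(7)$ has rank below $4$, so that Example~\ref{lemzprimtwof2} together with the rank criterion of \S\ref{seccpg5} applies.

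The genuinely delicate points are the mixed space $\Lambda_1^3\oplus\Lambda_7^3$ and the two degree-$4$ spaces $\Lambda_7^4,\Lambda_1^4\oplus\Lambda_7^4$, where the dimension bound holds but is not sufficient. The degree-$4$ exclusion is quick: since $\Lambda_7^4=\varphi\cdot\Lambda^1$, every element $\varphi\wedge\mu$ satisfies $(\varphi\wedge\mu)\wedge\mu=0$ and so is divisible by $\mu$; thus no nonzero element of $\Lambda_7^4$ is prime, and neither $\Lambda_7^4$ nor the larger $\Lambda_1^4\oplus\Lambda_7^4$ is prime. For $E=\Lambda_1^3\oplus\Lambda_7^3$ I would verify directly that $\omega^1\wedge\,\cdot\,$ is injective on $E$, which simultaneously settles $\Lambda_7^3\subset E$. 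Writing a general element as $c\varphi+*(\varphi\wedge\mu)$ and applying the Hodge star, the identity $*(\omega^1\wedge *\beta)=\pm\iota_{e_1}\beta$ converts $\omega^1\wedge(c\varphi+*(\varphi\wedge\mu))=0$ into $\pm c\,\iota_{e_1}(*\varphi)\pm\iota_{e_1}(\varphi\wedge\mu)=0$, where $\iota_{e_1}(\varphi\wedge\mu)=(\iota_{e_1}\varphi)\wedge\mu-\langle\omega^1,\mu\rangle\varphi$. Using $\iota_{e_1}\varphi=\omega^{23}+\omega^{45}+\omega^{67}$ and $\iota_{e_1}(*\varphi)=\omega^{357}-\omega^{346}-\omega^{256}-\omega^{247}$, one checks that the $3$-forms contributed by $\iota_{e_1}(*\varphi)$, by $(\iota_{e_1}\varphi)\wedge\mu'$ (with $\mu'$ the $\omega^1$-free part of $\mu$), and by the $\omega^1$-free part $\omega^{246}-\omega^{257}-\omega^{347}-\omega^{356}$ of $\varphi$ all lie in pairwise disjoint families of basis monomials. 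Matching coefficients then forces $c=0$ from the $\iota_{e_1}(*\varphi)$ terms, and the residual equation $(\iota_{e_1}\varphi)\wedge\mu'=\langle\omega^1,\mu\rangle(\omega^{246}-\omega^{257}-\omega^{347}-\omega^{356})$ forces $\mu=0$. Hence $\omega^1\wedge\,\cdot\,$ is injective on $E$, so $\Lambda_1^3\oplus\Lambda_7^3$ and $\Lambda_7^3$ are prime, completing the list.

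The main obstacle is exactly this coordinate verification for $\Lambda_1^3\oplus\Lambda_7^3$: the dimension bound cannot distinguish it (nor the degree-$4$ spaces) from a prime subspace, and one must rule out that a cross-term between $\langle\varphi\rangle$ and $\Lambda_7^3$ produces a nonzero form divisible by $\omega^1$. The disjointness of the three families of basis $3$-forms is the clean mechanism, and it has a conceptual source: the stabilizer of $\omega^1$ in $G_2$ is $\mathrm{SU}(3)$, under which $\langle\omega^2,\dots,\omega^7\rangle\cong\mathbb{C}^3$ carries the K\"ahler form $\iota_{e_1}\varphi$ and a complex volume form whose real and imaginary parts are the two $\omega^1$-free $3$-forms above; these are stable, hence prime, on $\mathbb{R}^6$, and they occupy complementary coordinate blocks from $(\iota_{e_1}\varphi)\wedge\mu'$.
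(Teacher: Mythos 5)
Your proof is correct and follows essentially the same route as the paper: Proposition \ref{propprisps6} is obtained there directly from Lemma \ref{lemtransact} and Lemma \ref{usefullempri5}, i.e., by reducing primeness of a $G_2$-invariant subspace to injectivity of wedging with the single covector $\omega^1$, with the resulting case-by-case check left implicit. Your dimension bound $\dim E\le\binom{6}{k}$ (using multiplicity-freeness of each $\Lambda^k$), the divisibility argument excluding $\Lambda_7^4$ and $\Lambda_1^4\oplus\Lambda_7^4$, and the monomial-disjointness computation for $\Lambda_1^3\oplus\Lambda_7^3$ (which I verified: the families $\{357,346,256,247\}$, $\{246,257,347,356\}$, and the twelve monomials of $(\omega^{23}+\omega^{45}+\omega^{67})\wedge\mu'$ are indeed pairwise disjoint) are exactly the details the paper omits, and they are correct.
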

If $(M,g)$ is a Riemannian $7$-fold with holonomy contained in $G_2,$ there is a torsion-free $G_2$-structure $Q$ and a parallel $3$-form $\varphi$ induced by $\varphi_0.$ By \cite[Lemma 11.5]{Salamon1989}, the torsion-free condition for $\varphi$ is equivalent to the condition $\mathrm{d}\varphi=\mathrm{d}^*\varphi=0.$ Reyes-Carrion showed in \cite{Reyes1998} that the complex 
\begin{eqnarray}\label{g2ellipticom0123}
0\rightarrow A^0\rightarrow A^1\rightarrow A_7^2\rightarrow A_1^3\rightarrow 0
\end{eqnarray}
generated by the adjoint representation of $G_2$ is elliptic. We develop the following theorem that gives all Dolbeault complexes on a compact $G_2$-manifold generated by $G_2$-invariant subspaces of $\Lambda^*(\mathbb{R}^7).$  
 \begin{theorem}\label{g2ellpcthe}
Let $(M,g,\varphi)$ be a compact $G_2$-manifold. Then the following complexes on $M$ are elliptic
\begin{eqnarray}
&&0\rightarrow A^0\rightarrow A^1\rightarrow A^2 \rightarrow A^3\rightarrow A_7^4\oplus A_{27}^4\rightarrow A_{14}^5\rightarrow 0,\label{g2ellipticom14} \\
&&0\rightarrow A_7^2\rightarrow A_1^3\oplus A_7^3\rightarrow A_1^4\rightarrow 0,\label{g2ellipticom234}\\
&&0\rightarrow A_1^3\oplus A_7^3\rightarrow A_1^4\oplus A_7^4\rightarrow 0.\label{g2ellipticom34}
\end{eqnarray}
The first one is of type \uppercase\expandafter{\romannumeral1} generated by $\langle*\varphi\rangle.$ The last two are of type \uppercase\expandafter{\romannumeral2}. Moreover, if $\mathrm{Hol}(g)=G_2,$ except for the above three, (\ref{g2ellipticom0123}) and two dual complexes of (\ref{g2ellipticom0123}) and (\ref{g2ellipticom14}), there is no other Dolbeault complexes on $M.$
 \end{theorem}
\begin{proof}
We divide the proof into three parts. By (\ref{canoisog27}), let $\{e_i^k\}_{i=1}^7$ be a basis for $\Lambda_7^k,k=2,3,4,5,$ where $e_i^2=*(*\varphi_0\cdot\omega^i),e_i^3=*(\varphi_0\cdot\omega^i),e_i^4=\varphi_0\cdot\omega^i,e_i^5=(*\varphi_0)\cdot\omega^i.$ 
\begin{itemize}
\item[(1)]  According to Theorem \ref{theA}, (\ref{g2ellipticom14}) is elliptic if we show that the line bundle $\langle*\varphi\rangle$ is complete prime. Using Lemma \ref{lemtransact} and Corollary \ref{coroimpcp}, it suffices to show that the symbol sequence of (\ref{g2ellipticom14}) is exact for some $\lambda\in\Lambda^1-\{0\}.$ This is equivalent to that $\pi_{14}(\lambda\cdot):\Lambda_7^4\oplus\Lambda_{27}^4\rightarrow\Lambda_{14}^5$ is surjective for some $\lambda\in\Lambda^1-\{0\},$ and also is equivalent to $\mathrm{dim}\,\lambda\Lambda^4\cap\Lambda_7^5=1$ since $\lambda (\Lambda_7^4\oplus\Lambda_{27}^4)=\lambda\Lambda^4.$ Set $\psi=*\varphi_0.$

\textbf{Claim:} Take $\lambda=\omega^1,$ we have $\omega^1\Lambda^4\cap\Lambda_7^5=\langle\omega^1\psi\rangle.$\\
Assume $\beta=\sum_I b_I\omega^I\in\Lambda^4$ such that $\omega^1\beta\in\Lambda_7^5,$ where the index $I$ does not contain $1.$ The basis $\{e_i^5\}_{i=1}^7$ for $\Lambda_7^5$ is given by
\begin{eqnarray*}
&&e_1^5=\omega^{14567}+\omega^{12367}+\omega^{12345},\ \ 
e_2^5=\omega^{24567}-\omega^{12357}+\omega^{12346},\\
&&e_3^5=\omega^{34567}-\omega^{12356}-\omega^{12347},\ \ 
e_4^5=\omega^{23467}+\omega^{13457}-\omega^{12456},\\
&&e_5^5=\omega^{23567}+\omega^{13456}-\omega^{12457},\ \ 
e_6^5=\omega^{23456}-\omega^{13567}+\omega^{12467},\\
&&e_7^5=\omega^{23457}-\omega^{13467}-\omega^{12567}.
\end{eqnarray*}
Set $\omega^1\beta=\sum_{i=1}^7a_ie_i^5.$ We can see immediately that $a_i=0$ for $i\geq 2.$ Hence, 
$$\omega^1\beta=a_1(\omega^{14567}+\omega^{12367}+\omega^{12345})=a_1\omega^1\psi.$$
\item[(2)] We next prove that the two complexes 
\begin{eqnarray}
&&0\longrightarrow\Lambda_7^2\stackrel{\pi_{1,7}\circ\lambda}{\longrightarrow}\Lambda_1^3\oplus\Lambda_7^3\stackrel{\pi_{1}\circ\lambda}{\longrightarrow}\Lambda_1^4\longrightarrow0,\label{firsprfthmb}\\ 
&&0\longrightarrow\Lambda_1^3\oplus\Lambda_7^3\stackrel{\pi_{1,7}\circ\lambda}{\longrightarrow}\Lambda_1^4\oplus\Lambda_7^4\longrightarrow0\label{secsprfthmb}
\end{eqnarray}
 are exact when $\lambda=\omega^1.$ By calculation we have 
$$\pi_{1,7}(e_1^2\cdot\omega^1)=\frac{3}{7}\varphi_0,\ \pi_{1,7}(e_{2i}^2\cdot\omega^1)=\frac{1}{2}e_{2i+1}^3,\ \pi_{1,7}(e_{2i+1}^2\cdot\omega^1)=-\frac{1}{2}e_{2i}^3$$
 for $i=1,2,3.$ Then $\pi_{1,7}\circ\omega^1$ is injective. It follows from $\pi_{1}(e_1^3\cdot\omega^1)=-\frac{4}{7}*\varphi_0$ that $\pi_{1}\circ\omega^1$ is surjective. Hence 
(\ref{firsprfthmb}) is exact. Similarly, (\ref{secsprfthmb}) is exact since 
$$\pi_{1,7}(\varphi_0\cdot\omega^1)=e_1^4,\pi_{1,7}(e_1^3\cdot\omega^1)=-\frac{4}{7}*\varphi_0, \pi_{1,7}(e_{2i}^3\cdot\omega^1)=-\frac{1}{2}e_{2i+1}^4, \pi_{1,7}(e_{2i+1}^3\cdot\omega^1)=\frac{1}{2}e_{2i}^4$$
 for $i=1,2,3.$
\item[(3)] When $\mathrm{hol}(g)=G_2,$ we explain that (\ref{g2ellipticom0123}), (\ref{g2ellipticom14}), (\ref{g2ellipticom234}) and (\ref{g2ellipticom34}) are the only Dolbeault complexes of type \uppercase\expandafter{\romannumeral1} and \uppercase\expandafter{\romannumeral2} on $M.$ Notice that a complete prime space must be prime and a complete prime pair $(F,E)$ implies $F$ is prime. From Propostion \ref{propprisps6}, there are only six $G_2$-invariant prime spaces. According to Proposition \ref{g2relat355}, one can check one by one that $\Lambda_7^2,\Lambda_1^3,\Lambda_7^3,\Lambda_1^3\oplus\Lambda_7^3$ are not complete prime spaces. In addition, except for $(\Lambda_7^2,\Lambda_{27}^3)$ and $(\Lambda_1^3\oplus\Lambda_7^3,\Lambda_{27}^4),$ the rest of possible complexes of type \uppercase\expandafter{\romannumeral2} are $\pi_1\lambda:\Lambda_1^3\rightarrow\Lambda_1^4$ and $\pi_7\lambda:\Lambda_7^3\rightarrow\Lambda_7^4.$ But, they are impossible since $\pi_1\lambda=0$ and $\pi_7(e_i^3\cdot\omega^i)=0.$ This completes the proof.
\end{itemize}
\end{proof}
\subsection*{Dolbeault complexes on $\mathrm{Spin}(7)$-manifold}
We review from \cite{Joyce1996,Joyce2000} some facts about $\mathrm{Spin}(7)$-structure. Let $(x_1,\cdots,x_8)$ be coordinates of $\mathbb{R}^8.$ Define a closed, self-dual, $\mathrm{Spin}(7)$-invariant
$4$-form $\Omega_0$ on $\mathbb{R}^8$ by 
\begin{equation}\label{Omega0def}
\begin{split}
\Omega_0&=\omega^{1234}+\omega^{1256}+\omega^{1278}+\omega^{1357}-\omega^{1368}-\omega^{1458}-\omega^{1467}\\
&-\omega^{2358}-\omega^{2367}-\omega^{2457}+\omega^{2468}+\omega^{3456}+\omega^{3478}+\omega^{5678},
\end{split}
\end{equation}
where $\omega^{ij\cdots l}=\mathrm{d}x_i\wedge \mathrm{d}x_j\wedge\cdots\wedge \mathrm{d}x_l.$ As a representation of $\mathrm{Spin}(7),$ the exterior algebra $\Lambda^*(\mathbb{R}^8)$ splits orthogonally into irreducible representations (ref.  \cite{Salamon1989,Joyce1996})
\begin{eqnarray*}
&&\Lambda^1=\Lambda_8^1,\ \  \Lambda^2=\Lambda_7^2\oplus\Lambda_{21}^2,\ \ \Lambda^3=\Lambda_8^3\oplus\Lambda_{48}^3,\\
 &&\Lambda^4=\Lambda_1^4\oplus\Lambda_7^4\oplus\Lambda_{27}^4\oplus\Lambda_{35}^4,\ \ \Lambda^-=\Lambda_{35}^4, \\
 &&\Lambda^5=\Lambda_8^5\oplus\Lambda_{48}^5,\ \ \Lambda^6=\Lambda_7^6\oplus\Lambda_{21}^6,\ \ \Lambda^7=\Lambda_8^7.
 \end{eqnarray*}
We note that $\Lambda_{7}^{2}$ and $\Lambda_{21}^2$ are the eigenspaces of the operator $T_{\Omega_0}:\Lambda^2\rightarrow\Lambda^2$ with eigenvalues $-1$ and $3,$ respectively. Moreover, there are canonical isomorphisms
\begin{eqnarray}\label{isospin7lambda7i}
\Lambda_8^1\cong\Lambda_8^3\cong\Lambda_8^5\cong\Lambda_8^7,\ \ \Lambda_7^2\cong\Lambda_7^4\cong\Lambda_7^6
\end{eqnarray}
 by a multiplication of $\Omega_0.$ From \cite[Proposition 12.5]{Salamon1989}, the subspace $\Lambda_7^4$ can be described in the following way. Write $\mathbb{R}^8=\mathbb{R}\oplus\mathbb{R}^7$ and let $\Lambda_7^3(\mathbb{R}^7)$ be the $G_2$-invariant subspace.  Define a map $i:\Lambda_7^3(\mathbb{R}^7)\rightarrow\Lambda^+(\mathbb{R}^8)$ by $i(\alpha)=\omega^1\wedge\alpha+*_7\alpha.$ Then $i$ is injective and we have $\Lambda_7^4(\mathbb{R}^8)=\mathrm{image}\,i.$

As a subgroup of $\mathrm{Spin}(7),$ the special unitary group $\mathrm{SU}(4)$ acts as usual on the sphere $S^7$ (in $\mathbb{R}^8=\mathbb{C}^4$) transitively. Then $\mathrm{Spin}(7)$ acts transitively on $S^7.$ Using Lemma \ref{usefullempri5}, we have
\begin{proposition}\label{allprimepin7d}
All $\mathrm{Spin}(7)$-invariant prime subspaces of $\Lambda^*(\mathbb{R}^8)$ are as follows 
$$\Lambda_7^2,\ \Lambda_{21}^2,\ \Lambda_8^3,\ \Lambda^-,\ \Lambda_7^6,\ \mbox{seven combinations of }\Lambda_1^4,\Lambda_7^4 \mbox{ and }\Lambda_{27}^4.$$
\end{proposition}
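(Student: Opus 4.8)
The plan is to classify the $\mathrm{Spin}(7)$-invariant prime subspaces degree by degree, exploiting that $\mathrm{Spin}(7)$ acts transitively on $S^7$. By Lemma \ref{usefullempri5}, a $\mathrm{Spin}(7)$-invariant subspace $E\subset\Lambda^k(\mathbb{R}^8)$ is prime if and only if the single multiplication $\omega^1\wedge\cdot:E\to\Lambda^{k+1}$ is injective; and since $\ker(\omega^1\wedge\cdot:\Lambda^k\to\Lambda^{k+1})=\omega^1\wedge\Lambda^{k-1}$ has dimension $\binom{7}{k-1}$, this is equivalent to $E\cap(\omega^1\wedge\Lambda^{k-1})=0$. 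Because the irreducible summands in each fixed degree have pairwise distinct dimensions, Schur's lemma shows that every invariant subspace is the direct sum of a subset of these summands, so I will only test candidates built from them. Two necessary conditions organize the search: (N1) every irreducible summand of a prime $E$ must itself be prime, since a non-prime form in a summand already lies in $E$; and (N2) $\dim E\le\binom{8}{k}-\binom{7}{k-1}=\binom{7}{k}$, directly from the kernel computation above.

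First I would determine which individual components are prime. The bound (N2) immediately excludes $\Lambda_8^1,\Lambda_{48}^3,\Lambda_{48}^5,\Lambda_{21}^6$ and $\Lambda_8^7$, whose dimensions exceed $\binom{7}{k}$ in the respective degrees (and $\Lambda_8^7$ is anyway excluded since $k=n-1$, by Example \ref{lemzprimtwof2}). On the positive side, $\Lambda_7^2$ and $\Lambda_{21}^2$ are the eigenspaces of $T_\Omega$ with nonzero eigenvalues $-1$ and $3$, hence prime by the lemma that every eigenspace of $T_\alpha$ with nonzero eigenvalue is prime; and $\Lambda^\pm\subset\Lambda^4$ are the eigenspaces of $*=T_1$, prime by Example \ref{exar4pri65}, so every subspace of $\Lambda^+$ is prime as well. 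The remaining positive cases $\Lambda_8^3$ and $\Lambda_7^6$ are boundary cases for (N2) (dimensions $8$ and $7=\binom{7}{6}$), which I would settle by the explicit injectivity check of $\omega^1\wedge\cdot$ afforded by Lemma \ref{usefullempri5}, exactly as in the computations in the proof of Theorem B. For $\Lambda_7^6$, since $k=n-2$, one may equivalently invoke the criterion of Example \ref{lemzprimtwof2} that a $6$-form is prime iff its coefficient matrix is invertible; this holds because every nonzero element of $\Lambda_7^2=*\Lambda_7^6$ has full rank.

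The one genuinely delicate point, and the step I expect to be the main obstacle, is $\Lambda_8^5$: it has dimension $8\le 21=\binom{7}{5}$ and so survives the dimension bound, yet it is not prime. The resolution is to identify it explicitly. The $\mathrm{Spin}(7)$-equivariant map $\Lambda^1\to\Lambda^5$, $v\mapsto v\wedge\Omega$, is injective because $\Omega$ is prime, so its image is $8$-dimensional and, being irreducible, must be the unique $8$-dimensional component $\Lambda_8^5$. Hence every element of $\Lambda_8^5$ has the form $v\wedge\Omega$, and since $v\wedge(v\wedge\Omega)=0$ it is divisible by $v$ and therefore not prime. Thus $\Lambda_8^5$ contains no nonzero prime form. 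This is precisely what distinguishes $\Lambda_8^5$ (non-prime, obtained by wedging with $\Omega$) from its Hodge dual $\Lambda_8^3=*\Lambda_8^5$ (prime, obtained by contraction).

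Finally I would assemble the list. Condition (N1) restricts attention to sums of prime components, and (N2) caps the total dimension: in degree $2$ this leaves $\Lambda_7^2$ and $\Lambda_{21}^2$ but not their sum $\Lambda^2$ (dimension $28>21$); in degree $3$ only $\Lambda_8^3$; in degree $4$ exactly the invariant subspaces of $\Lambda^+$ together with $\Lambda^-=\Lambda_{35}^4$, since any invariant subspace mixing $\Lambda^-$ with a nonzero part of $\Lambda^+$ has dimension $>35=\binom{7}{4}$; in degree $5$ nothing, as neither $\Lambda_8^5$ nor $\Lambda_{48}^5$ is prime; and in degree $6$ only $\Lambda_7^6$. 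Sufficiency of every surviving candidate was already established above (subspaces of the prime space $\Lambda^+$ are prime, and the single-component cases were checked individually), so the list in the statement is complete.
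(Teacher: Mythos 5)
Your proposal is correct, and the comparison with the paper is lopsided: the paper's entire proof of Proposition \ref{allprimepin7d} is the phrase ``Using Lemma \ref{usefullempri5}, we have,'' with all casework suppressed, so your degree-by-degree classification is best read as a completion of the paper's one-line argument rather than a different route. The shared backbone is the same key lemma: transitivity of $\mathrm{Spin}(7)$ on $S^7$ reduces primeness of an invariant subspace $E\subset\Lambda^k$ to injectivity of $\omega^1\wedge\cdot$ on $E$. What you supply, and the paper never states, is the organizing machinery: the multiplicity-free Schur reduction (the summands in each degree have distinct dimensions, so invariant subspaces are sums of them), the bound $\dim E\le\binom{8}{k}-\binom{7}{k-1}=\binom{7}{k}$ coming from $\ker(\omega^1\wedge\cdot)=\omega^1\wedge\Lambda^{k-1}$, and above all the exclusion of $\Lambda_8^5$, the one candidate that passes the dimension count yet fails primeness. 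Your identification $\Lambda_8^5=\Lambda^1\wedge\Omega$ (equivariance, injectivity of $v\mapsto v\wedge\Omega$ since $\Omega$ is prime, multiplicity-freeness) together with $v\wedge(v\wedge\Omega)=0$ is exactly the right argument, and it is consistent with the paper, whose proof of Theorem C takes $\{\omega^k\cdot\Omega\}_{k=1}^8$ as a basis of $\Lambda_8^5$. Two deferred steps deserve a note, though neither is a substantive gap. First, you leave the primeness of $\Lambda_8^3$ to ``an explicit check'' and point at Theorem B, which concerns $G_2$; the relevant computations are in the proof of Theorem C, where exactness of (\ref{spin72ec2}) shows $p_5\circ(\omega^1\wedge\cdot)$ is injective on $\Lambda_8^3$, which a fortiori gives the injectivity you need; alternatively, the contraction of $\Omega$ by $e_1$ is the prime $G_2$ three-form $\varphi$ on $\langle e_1\rangle^\perp$, and $\mathrm{Spin}(7)$-equivariance of $v\mapsto *(v\wedge\Omega)$ handles all other elements. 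Second, your degree-six argument rests on the unproved assertion that every nonzero element of $\Lambda_7^2$ has full rank; the elementary substitute is a direct computation with the paper's basis $\{\alpha_i\}_{i=1}^7$ of $\Lambda_7^2$, which gives $\omega^1\wedge(*\alpha_i)=\pm\,*\omega^{i+1}$ for $i=1,\dots,7$, visibly independent, so $\omega^1\wedge\cdot$ is injective on $\Lambda_7^6=*\Lambda_7^2$.
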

\begin{proposition}\label{lakirela6}
These spaces $\Lambda_i^k$ satisfy the following relations
\begin{eqnarray*}
&&\Lambda_7^2\cdot\Lambda^1=\Lambda_{21}^2\cdot\Lambda^1=\Lambda^3,\\  &&\Lambda_8^3\cdot\Lambda^1=(\Lambda_{27}^4)^\perp,\ \ \Lambda_{48}^3\cdot\Lambda^1=(\Lambda_1^4)^\perp,\\ 
&&\Lambda_7^4\cdot\Lambda^1=\Lambda^\pm\cdot\Lambda^1=\Lambda^5,\ \ \Lambda_{27}^4\cdot\Lambda^1=\Lambda_{48}^5,\\
&&\Lambda_8^5\cdot\Lambda^1=\Lambda_{48}^5\cdot\Lambda^1=\Lambda^6.
\end{eqnarray*}
\end{proposition}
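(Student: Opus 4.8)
The plan is to treat all the relations by a single representation-theoretic mechanism and then reduce the remaining verifications to one fixed covector. The wedge product $W\otimes\Lambda^1\to\Lambda^{k+1}$ is $\mathrm{Spin}(7)$-equivariant, so for any invariant $W\subseteq\Lambda^k$ the product $W\cdot\Lambda^1$ is an invariant subspace of $\Lambda^{k+1}$, and it is a quotient of $W\otimes\Lambda_8^1$. Since each $\Lambda^{m}$ decomposes into \emph{pairwise non-isomorphic} irreducibles (the summand dimensions in any fixed degree are distinct), every invariant subspace of $\Lambda^{m}$ is the sum of a subset of its irreducible summands, and a summand $V_j$ lies in the subspace if and only if the subspace projects nontrivially onto $V_j$. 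Consequently the irreducible constituents of $W\cdot\Lambda^1$ are exactly those summands $V_j$ of $\Lambda^{k+1}$ that (i) occur as constituents of $W\otimes\Lambda_8^1$ and (ii) are actually hit by the multiplication map.

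Step one is to assemble the $\mathrm{Spin}(7)$ tensor products of $\Lambda_8^1\cong\Delta$ with each generator. The basic one is $\Delta\otimes\Delta\cong\mathbf 1\oplus V_7\oplus\mathbf{21}\oplus\mathbf{35}$ (that is, $S^2\Delta\oplus\Lambda^2\Delta$), together with $V_7\otimes\Delta\cong\Delta\oplus\mathbf{48}$; the remaining products $\mathbf{21}\otimes\Delta,\mathbf{27}\otimes\Delta,\mathbf{35}\otimes\Delta,\mathbf{48}\otimes\Delta$ I record only through their constituents of dimension $7,8,21,27,48$, which is all that can meet the target degrees. Condition (i) alone already forces every \emph{exclusion} in the proposition. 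For instance $\Lambda_8^3\otimes\Lambda^1\cong\Delta\otimes\Delta$ contains no $\mathbf{27}$, so $\Lambda_{27}^4\not\subseteq\Lambda_8^3\cdot\Lambda^1$, giving $\Lambda_8^3\cdot\Lambda^1\subseteq(\Lambda_{27}^4)^\perp$; likewise $\mathbf{48}\otimes\Delta$ contains no copy of $\mathbf 1$ (since $\mathrm{Hom}(\mathbf1,\mathbf{48}\otimes\Delta)=\mathrm{Hom}(\mathbf{48},\Delta)=0$), so $\langle\Omega\rangle\not\subseteq\Lambda_{48}^3\cdot\Lambda^1$; and $\mathbf{27}\otimes\Delta$ contains no $\Delta$ (since $\mathrm{Hom}(\Delta,\mathbf{27}\otimes\Delta)=\mathrm{Hom}(\mathbf{27},\Delta\otimes\Delta)=0$), so $\Lambda_8^5\not\subseteq\Lambda_{27}^4\cdot\Lambda^1$. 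Equivalently, via the adjoint identity $\langle\lambda\wedge\beta,\gamma\rangle=\langle\beta,\iota_\lambda\gamma\rangle$ with $\iota_\lambda$ the interior product, each exclusion says that $\iota_\lambda$ sends the excluded summand into the orthogonal complement of $W$, which is the same vanishing of Hom-spaces; for $\langle\Omega\rangle$ and the $\Lambda_8^k$ this is immediate from Schur's lemma applied to the multiplicity-one one- and eight-dimensional summands.

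Step two is the surjectivity direction (ii): every constituent allowed by (i) is genuinely hit. Here I invoke the transitivity of $\mathrm{Spin}(7)$ on $S^7$, which (as in Corollary \ref{coroimpcp}) lets me replace $\Lambda^1$ by a single fixed covector, so that $W\cdot\Lambda^1$ is the invariant subspace generated by $\omega^1\wedge W$; it then suffices to compute the projections of $\omega^1\wedge w$, for $w$ ranging over an isotypic basis of $W$, onto each target summand and check they are nonzero — exactly the finite calculation performed in the proof of Theorem B. For the relations asserting $W\cdot\Lambda^1=\Lambda^{k+1}$ (the first, fourth and sixth lines) this shortens to a dimension count once the exclusions are in place; the Hodge star $*\colon\Lambda^{k+1}\to\Lambda^{7-k}$ and the canonical isomorphisms $\Lambda_8^1\cong\Lambda_8^3\cong\Lambda_8^5\cong\Lambda_8^7$ and $\Lambda_7^2\cong\Lambda_7^4\cong\Lambda_7^6$ (multiplication by $\Omega$) transport a single computation between dual degrees and avoid repetition.

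The main obstacle is precisely this surjectivity step: membership of a summand in $W\otimes\Lambda_8^1$ is necessary but not sufficient for it to survive the wedge map, and the cheap dimension bounds do not by themselves force, say, both $\mathbf 8$ and $\mathbf{48}$ to appear in $\Lambda_7^2\cdot\Lambda^1$ and in $\Lambda_{21}^2\cdot\Lambda^1$. I expect each such case to need one explicit projection evaluated on $\omega^1\wedge(\text{basis of }W)$, organized so that the canonical $\Omega$- and $*$-isomorphisms collapse the seven nominally separate ``fill everything'' checks into a handful. The exclusion statements, by contrast, are forced cleanly by the tensor-product constituents and require no computation.
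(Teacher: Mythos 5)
Your two-step framework is sound, and it is worth noting that the paper itself states this proposition with \emph{no proof at all}, so there is no ``paper's own proof'' to match: the nearest thing in the paper is the collection of explicit projection formulas inside the proof of Theorem C (the bases $\alpha_i,\beta_i$ and the formulas for $p_1,\dots,p_6$), which verify several, but not all, of the relations asserted here. Your step one is correct and fully rigorous as written: each $\Lambda^m$ is multiplicity-free (the summands in a fixed degree have distinct dimensions), so $W\cdot\Lambda^1$ is a sum of canonical summands, and your Schur-type computations correctly force exactly the three exclusions the proposition needs: $\Lambda_{27}^4\not\subseteq\Lambda_8^3\cdot\Lambda^1$ because $\Lambda_8^3\otimes\Lambda^1\cong\Delta\otimes\Delta\cong\mathbf 1\oplus\mathbf 7\oplus\mathbf{21}\oplus\mathbf{35}$ has no $\mathbf{27}$; $\langle\Omega\rangle\not\subseteq\Lambda_{48}^3\cdot\Lambda^1$; and $\Lambda_8^5\not\subseteq\Lambda_{27}^4\cdot\Lambda^1$. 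Moreover, for the one relation whose right-hand side is a single irreducible, $\Lambda_{27}^4\cdot\Lambda^1=\Lambda_{48}^5$, your framework finishes with no computation at all: the product is a nonzero invariant subspace (a nonzero $4$-form cannot be killed by wedging with every $1$-form) of the irreducible $\Lambda_{48}^5$, hence equals it.

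The genuine gap is your step two, and it is most of the proposition. Every remaining equality asserts that \emph{all} allowed summands are actually hit, and this is nowhere established: you state it as a plan (``one explicit projection evaluated on $\omega^1\wedge(\text{basis of }W)$'') and honestly flag it as the main obstacle, but no such projection is computed. Worse, the intermediate claim that the full-target lines ``shorten to a dimension count once the exclusions are in place'' is false and is contradicted by your own closing paragraph: for $\Lambda_7^2\cdot\Lambda^1$ there are no exclusions, and dimension counting cannot rule out the a priori possibility that the $56$-dimensional $\Lambda_7^2\otimes\Lambda^1$ maps onto $\Lambda_{48}^3$ alone with an $8$-dimensional kernel. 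So, as written, the proposal proves the containments $\Lambda_8^3\cdot\Lambda^1\subseteq(\Lambda_{27}^4)^\perp$ and $\Lambda_{48}^3\cdot\Lambda^1\subseteq(\Lambda_1^4)^\perp$ and the equality $\Lambda_{27}^4\cdot\Lambda^1=\Lambda_{48}^5$, and leaves every other equality open. The missing checks are short but must be done; for example, since $\langle\beta\wedge\omega^1,*(\Omega\wedge\mu)\rangle=\pm\,\beta\wedge\omega^1\wedge\mu\wedge\Omega/\mathrm{vol}$, the single evaluation $\alpha_1\wedge\omega^1\wedge\omega^2\wedge\Omega=3\,\mathrm{vol}\neq 0$ (with $\alpha_1=\omega^{12}+\omega^{34}+\omega^{56}+\omega^{78}\in\Lambda_7^2$) proves $\Lambda_8^3\subseteq\Lambda_7^2\cdot\Lambda^1$, and an analogous evaluation is needed for each allowed summand in each line ($\Lambda_{48}^3\subseteq\Lambda_7^2\cdot\Lambda^1$, the three summands of $(\Lambda_{27}^4)^\perp$ and of $(\Lambda_1^4)^\perp$, both summands of $\Lambda^5$ and of $\Lambda^6$, etc.). Your $*$- and $\Omega$-duality observations would roughly halve the number of such evaluations, but they cannot eliminate them, and without them the proposition is not proved.
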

Reyes-Carrion showed in \cite{Reyes1998} that the complex 
$$C_1:0\rightarrow\Lambda^0\rightarrow\Lambda^1\rightarrow\Lambda_7^2\rightarrow 0$$
generated by the adjoint representation of $\mathrm{Spin}(7)$ is elliptic. The following theorem gives all Dolbeault complexes on a compact $\mathrm{Spin}(7)$-manifold generated by $\mathrm{Spin}(7)$-invariant subspaces of $\Lambda^*(\mathbb{R}^8).$ We let $\widehat{C}=A^*(M)/C$ denote the dual complex of $C.$
 \begin{theorem}\label{spin7ellpcthe}
 Let $(M,g,\Omega)$ be a compact $\mathrm{Spin}(7)$-manifold. Then the following complexes on $M$ are elliptic
\begin{eqnarray*}
C_\pm&:& 0\rightarrow A^0\rightarrow A^1\rightarrow A^2\rightarrow A^3\rightarrow A^\pm\rightarrow 0,\\
C_1\oplus C_3&:& 0\rightarrow A^0\rightarrow A^1\rightarrow A^2\rightarrow A_{48}^3\rightarrow A_{27}^4\rightarrow 0,\\
C_-\oplus C_5&:& 0\rightarrow A^0\rightarrow A^1\rightarrow A^2\rightarrow A^3\rightarrow A_1^4\oplus A_7^4\oplus A^-\rightarrow A_8^5\rightarrow 0,\\
\widehat{C}_6&:& 0\rightarrow A^0\rightarrow A^1\rightarrow A^2\rightarrow A^3\rightarrow A^4\rightarrow A^5\rightarrow A_{21}^6\rightarrow0,\\
C_2&:& 0\rightarrow A_7^2\rightarrow A_8^3\rightarrow A_1^4\rightarrow0,\\
C_3&:& 0\rightarrow A_{21}^2\rightarrow A_{48}^3\rightarrow A_{27}^4\rightarrow0,\\
C_4&:& 0\rightarrow A_8^3\rightarrow A_1^4\oplus A_7^4\rightarrow0,\\
C_5&:& 0\rightarrow A_1^4\oplus A_7^4\rightarrow A_8^5\rightarrow0,\\
C_6&:&  0\rightarrow A_7^6\rightarrow A^7\rightarrow A^8\rightarrow0.
\end{eqnarray*}
The first five are of type \uppercase\expandafter{\romannumeral1} generated by $\Lambda^\pm,\Lambda_8^3,\Lambda_{27}^4$ and $\Lambda_7^6$ respectively. The last four are of type \uppercase\expandafter{\romannumeral2}.
Morever, if $\mathrm{Hol}(g)=\mathrm{Spin}(7),$ then except for the above nine, $C_1$ and six complexes of type \uppercase\expandafter{\romannumeral3}, there is no other Dolbeault complexes on $M.$
 \end{theorem}
\begin{proof}
This proof is very analogous to that in Thereom B. It is obvious that $\Lambda^+,\Lambda^-$ and $\Lambda_7^6$ are complete prime due to Proposition \ref{allprimepin7d}. We divide the proof into three parts.
\begin{itemize}
\item[(1)] We first prove that $\Lambda_{27}^4$ and $\Lambda_8^3$ are complete prime, this is equivalent to that the two projections $\pi_8:\omega^1\cdot(\Lambda_1^4\oplus\Lambda_7^4\oplus\Lambda^-)
\rightarrow\Lambda_8^5$ and $\pi_{27}:\omega^1\cdot\Lambda_{48}^3\rightarrow\Lambda_{27}^4$ are surjective. Choose  a basis $\{\omega^k\cdot\Omega_0\}_{k=1}^8$ for $\Lambda_8^5.$ Observe that if $(\omega^k\cdot\Omega_0,\omega^{1ijuv})=1$ or $(\omega^k\cdot\Omega_0,\omega^1\cdot*\omega^{ijuv})=-1,$ then 
$$\pi_8(\omega^1(\omega^{ijuv}-*\omega^{ijuv}))=\frac{1}{7}\omega^k\cdot\Omega_0.$$
Hence, we have $\pi_8(\omega^1\cdot\Lambda^-)=\Lambda_8^5.$ We claim that\\
\textbf{Claim:} The projection $\pi_{27}:\omega^1\cdot\Lambda_{48}^3\rightarrow\Lambda_{27}^4$ is surjective.\\
After choosing two bases of $\Lambda_{48}^3$ and $\Lambda_{27}^4,$ the map $\pi_{27}$ can be written explicitly in a matrix which is full rank. It is not difficult but is too long to give here. 
\item[(2)] Take $\lambda=\omega^1,$ we next prove that the complexes
\begin{eqnarray}
&&0\longrightarrow\Lambda_7^2\stackrel{\pi_{8}\omega^1}{\longrightarrow}\Lambda_8^3\stackrel{\pi_{1}\omega^1}{\longrightarrow}\Lambda_1^4\rightarrow0,\label{spin72ec1} \\
&&0\longrightarrow\Lambda_8^3\stackrel{\pi_{1,7}\omega^1}{\longrightarrow}\Lambda_1^4\oplus\Lambda_7^4\longrightarrow0,\label{spin72ec2}   \\
&&0\longrightarrow\Lambda_1^4\oplus\Lambda_7^4\stackrel{\pi_{8}\omega^1}{\longrightarrow}\Lambda_8^5\longrightarrow0, \label{spin72ec3} \\
&&0\longrightarrow\Lambda_{21}^2\stackrel{\pi_{48}\omega^1}{\longrightarrow}\Lambda_{48}^3\stackrel{\pi_{27}\omega^1}{\longrightarrow}\Lambda_{27}^4\longrightarrow0 \label{spin72ec4}
\end{eqnarray}
are exact. Choose a basis $\{\alpha_i\}_{i=1}^7$ for $\Lambda_7^2$ and a basis $\{\beta_i\}_{i=1}^7$ for $\Lambda_7^4$ as
\begin{equation} \label{basislamb7}
\begin{split}
&\alpha_1=\omega^{12}+\omega^{34}+\omega^{56}+\omega^{78},\ \ \ 
\alpha_2=\omega^{13}-\omega^{24}+\omega^{57}-\omega^{68},\\
&\alpha_3=\omega^{14}+\omega^{23}-\omega^{58}-\omega^{67},\ \ \ 
\alpha_4=\omega^{15}-\omega^{26}-\omega^{37}+\omega^{48},\\
&\alpha_5=\omega^{16}+\omega^{25}+\omega^{38}+\omega^{47},\ \ \ 
\alpha_6=\omega^{17}-\omega^{28}+\omega^{35}-\omega^{46},\\
&\alpha_7=\omega^{18}+\omega^{27}-\omega^{36}-\omega^{45},\\
&\beta_1=\omega^{2467}+\omega^{2458}+\omega^{2368}-\omega^{2357}-\omega^{1358}-\omega^{1367}-\omega^{1457}+\omega^{1468},\\
&\beta_2=\omega^{3467}+\omega^{3458}+\omega^{2378}+\omega^{2356}+\omega^{1258}+\omega^{1267}+\omega^{1456}+\omega^{1478},\\
&\beta_3=\omega^{3457}-\omega^{3468}+\omega^{2478}+\omega^{2456}+\omega^{1257}-\omega^{1268}-\omega^{1356}-\omega^{1378},\\
&\beta_4=\omega^{2345}-\omega^{4567}-\omega^{3568}+\omega^{2578}-\omega^{1238}-\omega^{1247}+\omega^{1346}+\omega^{1678},\\
&\beta_5=\omega^{4568}-\omega^{3567}+\omega^{2678}+\omega^{2346}-\omega^{1237}+\omega^{1248}-\omega^{1345}-\omega^{1578},\\
&\beta_6=\omega^{4578}+\omega^{3678}+\omega^{2567}+\omega^{2347}+\omega^{1236}+\omega^{1245}+\omega^{1348}+\omega^{1568},\\
&\beta_7=\omega^{3578}-\omega^{4678}+\omega^{2568}+\omega^{2348}+\omega^{1235}-\omega^{1246}-\omega^{1347}-\omega^{1567}.
\end{split}
\end{equation}
Calculations show that the projections in (\ref{spin72ec1})-(\ref{spin72ec4}) satisfy 
\begin{eqnarray*}
&&\pi_{8}(\omega^1\alpha_i)=-\frac{3}{7}*(\Omega_0\omega^{i+1}),\ \ \ 
\pi_{1}(\omega^1*(\Omega_0\omega^1))=\frac{1}{2}\Omega_0,\\
&&\pi_{1,7}(\omega^1*(\Omega_0\omega^{i+1}))=\frac{1}{2}\beta_i,\ \ \ 
\pi_{8}(\omega^1\beta_{i+1})=\frac{4}{7}\Omega_0\omega^{i+1},
\end{eqnarray*}
for $i=1,2,\cdots,7.$ Therefore  (\ref{spin72ec1}), (\ref{spin72ec2}) and  (\ref{spin72ec3}) are exact. Because $\Lambda_8^3$ is prime, we know that $(\omega^1\Lambda_{21}^2)\cap\Lambda_8^3=\{0\},$ this implies $\pi_{48}\circ\omega^1$ is injective. Hence  (\ref{spin72ec4}) is exact.
\item[(3)] When $\mathrm{hol}(g)=\mathrm{Spin}(7),$ we check the prime spaces in Proposition \ref{allprimepin7d} one by one. Using the relations of $\Lambda_i^k$ in Proposition \ref{lakirela6}, one can see that $\Lambda_7^2$ is not complete prime. Besides, $\Lambda_{27}^4$ is the only complete prime proper subspace of $\Lambda^+.$ Hence there is no other complete prime spaces of type  \uppercase\expandafter{\romannumeral1}. Finally, except for $(\Lambda_7^2,\Lambda_{48}^3),(\Lambda_{21}^2,\Lambda_8^3),(\Lambda_8^3,\Lambda_{27}^4\oplus\Lambda^-)$ and $(\Lambda_1^4\oplus\Lambda_7^4,\Lambda_{48}^5),$ there is no more complete prime pairs of type  \uppercase\expandafter{\romannumeral2}. This completes the proof.
 \end{itemize}
 \end{proof}
 
 \section{Indexes of complexes}\label{lastsect77s}
In this section we compute the index of complexes in Theorem \ref{g2ellpcthe} and Theorem \ref{spin7ellpcthe}. Let $(M,g)$ be a compact Riemannian $n$-manifold with a compatible $G$-structure $Q$ and let $W$ be a subbundle of $\Lambda^k(T^*M)$ arising from a $G$-invariant subspace $\mathcal{W}$ of $\Lambda^k(\mathbb{R}^n).$ S. S. Chern showed in \cite{Chern1957} that the orthogonal projection $p:\Lambda^k(M)\rightarrow W$ commutes with Laplacian, i.e., $p\Delta=\Delta p,$ and one can define a refined cohomology group
$$\mathcal{H}(W):=\mathrm{Ker}(\Delta|_W)=\{\alpha\in\Gamma(W)|\mathrm{d}\alpha=\mathrm{d}^*\alpha=0\}.$$
Moreover, if $\mathcal{W}$ and $\mathcal{W}'$ are isomorphic representations of $G,$ then $\mathcal{H}(W)$ is isomorphic to $\mathcal{H}(W').$ When $\Lambda_i^k$ is an irreducible subbundle, we write $\mathcal{H}(\Lambda_i^k)$ as $\mathcal{H}_i^k.$ See \cite[Chap. 3]{Joyce2007} for details.

We write a Dolbeault complex 
$$0\rightarrow\Gamma(W_0)\rightarrow\Gamma(W_1)\rightarrow\Gamma(W_2)\rightarrow\cdots\rightarrow\Gamma(W_n)\rightarrow 0$$ 
as $(W_*,D),$ where $W_i$ is a subbundle (perhaps with rank zero) of $\Lambda^i(T^*M).$ If for every $i,$ the $i$-th cohomology group of $(W_*,D)$ is isomorphic to $\mathcal{H}(W_i),$ we say that $(W_*,D)$ is \emph{harmonic}.
\begin{theorem}\label{lastheind2}
All the Dolbeault complexes in Theorem \ref{g2ellpcthe} and Theorem \ref{spin7ellpcthe} are harmonic.
\end{theorem}
Let $A=(A_i,D)_{i=1}^r$ and $B=(B_i,D)_{i=r}^k$ be two complexes, where $A$ is harmonic. The top degree of $A$ and the nethermost degree of $B$ are the same. Then $B$ is harmonic if and only if $A\oplus B$ is harmonic. It follows from the short exact sequence $A\rightarrow A\oplus B\rightarrow B$ that 
\begin{equation}
H^i(A\oplus B)\cong\left\{\begin{aligned}
&&H^i(A)\quad\quad\quad\quad, i<r,\\
&&H^i(A)\oplus H^i(B), i=r,\\
&&H^i(B)\quad\quad\quad\quad, i>r.
\end{aligned}\right.
\end{equation}
Similarly, let $C:A^0\rightarrow\cdots\rightarrow A^{k-1}\rightarrow \Gamma(E_0)\rightarrow \Gamma(E_1)\rightarrow 0$ be a complex such that at least two of three isomorphisms $H^{k-1}(C)=H^{k-1}(M),H^{k}(C)\cong\mathcal{H}(E_0),H^{k+1}(C)\cong\mathcal{H}(E_1)$ are true. By considering the exact sequence $C\rightarrow A^*(M)\rightarrow\widehat{C},$ we see that $C$ is harmonic if and only if $\widehat{C}$ is harmonic. 
\subsection{Proof of Theorem \ref{lastheind2} for $G_2$-manifolds}
 Let $(M,g,\varphi)$ be a compact $G_2$-manifold. With canonical isomorphisms (\ref{canoisog27}) and $\Lambda_1^0\cong\Lambda_1^3\cong\Lambda_1^4\cong\Lambda_1^7,$ each operator $D:\Lambda_1^k\oplus\Lambda_7^l\rightarrow\Lambda_1^r\oplus\Lambda_7^s$ can be written as $\hat{D}:\Lambda^0\oplus\Lambda^1\rightarrow\Lambda^0\oplus\Lambda^1.$ 
  \begin{lemma}\label{g2lamb714}
Let $\lambda\in A^1.$ If $\pi_7\mathrm{d}\lambda=0$ or $\pi_{14}\mathrm{d}\lambda=0,$ then $\mathrm{d}\lambda=0.$ 
\end{lemma}
\begin{proof}
Since $\Lambda_7^2$ and $\Lambda_{14}^2$ are eigenspaces of $T_{\varphi_0}:\Lambda^2\rightarrow\Lambda^2,$ let $*(\varphi\wedge\mathrm{d}\lambda)=a\cdot\mathrm{d}\lambda,$ where $a=-1$ or $2.$ Then we have $\mathrm{d}\lambda=0$ from
$$(\mathrm{d}\lambda,\mathrm{d}\lambda)=\int_M\mathrm{d}\lambda\wedge *\mathrm{d}\lambda=\frac{1}{a}\int_M\mathrm{d}(\lambda\wedge\varphi\wedge\mathrm{d}\lambda)=0.$$
 \end{proof}
 Define a differential operator $\Theta:A^1\rightarrow A^1$ on $G_2$-manifold by $\Theta(\lambda)=*(*\varphi\wedge\mathrm{d}\lambda).$ By Lemma \ref{g2lamb714} and the fact that $A_{14}^2$ is the kernel of the multiplication $A^2\rightarrow A^6$ by $*\varphi,$ the kernel of $\Theta$ is the space of all closed $1$-forms. For every $p\in M,$ choose a coordinate system $(x_1,\cdots,x_7)$ on $U\subset M$ near $p$ such that $(\frac{\partial}{\partial x_i},\frac{\partial}{\partial x_j})_p=\delta_{ij}.$ By taking $\lambda|_U=x_i\mathrm{d}x_j,$ it is not hard to check that the following equalities hold.
 \begin{proposition}\label{indpropg2pi1}
Let $\lambda\in A^1,\alpha=*(*\varphi\wedge\lambda)\in A_7^2$ and $\beta=*(\varphi\wedge\lambda)\in A_7^3.$ Then 
 \begin{eqnarray*}
&&  \pi_1 \mathrm{d}\alpha=-\frac{3}{7}\mathrm{d}^*\lambda\cdot\varphi,\ \  \pi_1\mathrm{d}\beta=-\frac{4}{7}\mathrm{d}^*\lambda\cdot *\varphi,\\
 && \pi_7\mathrm{d}\lambda=\frac{1}{3}*(*\varphi\wedge\Theta(\lambda)),\ \  \pi_7\mathrm{d}\alpha=\frac{1}{2}*(\varphi\wedge\Theta(\lambda)),\\
 && \pi_7\mathrm{d}\beta=\frac{1}{2}\varphi\wedge\Theta(\lambda),\ \  \pi_7\mathrm{d}^*\alpha=\Theta(\lambda),\\
 && \pi_7\mathrm{d}^*\beta=\frac{2}{3}*(*\varphi\wedge\Theta(\lambda)),\ \  \pi_7\mathrm{d}*\lambda=\frac{1}{3}*\varphi\wedge\Theta(\lambda).
 \end{eqnarray*}
 \end{proposition}
From Proposition \ref{indpropg2pi1} we immediately deduce 
\begin{eqnarray}
\pi_1\mathrm{d}\alpha=0\ \Leftrightarrow\!\!&\pi_1\mathrm{d}\beta=0&\!\!\Leftrightarrow\ \mathrm{d}^*\lambda=0,\\
\pi_7\mathrm{d}\alpha=0\ \Leftrightarrow\ \mathrm{d}^*\alpha=0\ \Leftrightarrow\!\!&\pi_7\mathrm{d}\beta=0&\!\! \Leftrightarrow\ \mathrm{d}^*\beta=0\ \Leftrightarrow\ \mathrm{d}\lambda=0.
 \end{eqnarray}
 By Proposition \ref{indpropg2pi1} , we see that (\ref{g2ellipticom0123}),(\ref{g2ellipticom234}),(\ref{g2ellipticom34}) and $A_1^4\rightarrow A_7^5\rightarrow A^6\rightarrow A^7$ (denoted by $K_2$) are harmonic. It remains to consider the complex (\ref{g2ellipticom14}) (denoted by $K_1$) which is a quotient complex of $A^*(M)$ by $K_2.$ Assume $\alpha\in H^3(K_1)$ with $\mathrm{d}\alpha\in A_1^4.$ Then $\mathrm{d}\alpha=0$ because $*\varphi$ is a prime and not exact form. This means $H^3(K_1)=H^3(M).$ Assume $\beta\in A_{14}^5$ with $\mathrm{d}^*\beta\in A_1^4.$ Similarly, we have $\mathrm{d}^*\beta=0.$ It follows from $\varphi\wedge*\beta+\beta=0$ that $\mathrm{d}\beta=-\varphi\wedge*\mathrm{d}^*\beta=0.$ This means $H^5(K_1)=\mathcal{H}_{14}^5.$ To compute $H^4(K_1),$ cosider
the short exact sequence $0\rightarrow K\rightarrow A^*(M)\rightarrow K_1\rightarrow 0$ which induces a long exact sequence
 $$\cdots\rightarrow H^3(M)\stackrel{i}{\rightarrow} H^3(K_1)\rightarrow\mathcal{H}_1^4\rightarrow H^4(M)\rightarrow H^4(K_1)\rightarrow\mathcal{H}_7^5\stackrel{j}{\rightarrow}H^5(M)\rightarrow\mathcal{H}_{14}^5\rightarrow\cdots.$$
Since $i$ is surjective and $j$ is injective, we have $H^4(K_1)\cong H^4(M)/\mathcal{H}_1^4\cong\mathcal{H}_7^4\oplus\mathcal{H}_{27}^4.$
\subsection{Proof of Theorem \ref{lastheind2} for $\mathrm{Spin}(7)$-manifolds}
Observe that $C_3$ satisfies $\widehat{C_1\oplus C_3}=C_4\oplus\widehat{C}_-,$ all complexes in Theorem \ref{spin7ellpcthe} are generated by $C_1,C_2,C_4,C_5,C_6,C_\pm.$ By Example \ref{examplesign}, $C_\pm$ are harmonic. We next explain that $C_1,C_2,C_4,C_5,C_6$ are Dirac operators or the rearrangements of Dirac operators.

Let $e_1,\cdots,e_8$ be an oriented orthonormal basis of $\mathbb{R}^8.$ Recall from \cite{Harvey1993} or \cite{Jianwei2002} that the spin representations of the real Clifford algebra $Cl_8$ can be constructed in the following way. Define $A=A_8(e_1e_3e_5e_7-1),$ where
 $$A_8=\mathrm{Re}(e_1+\sqrt{-1}e_2)(e_3+\sqrt{-1}e_4)(e_5+\sqrt{-1}e_6)(e_7+\sqrt{-1}e_8).$$
Then $V_8^+=Cl_8^{\mathrm{even}}\cdot A$ and $V_8^-=Cl_8^{\mathrm{odd}}\cdot A$ are two spin representations of $Cl_8.$ Moreover, $V_8^+$ has basis $\{\sigma_i=e_1e_iA\}_{i=1}^8$ and $V_8^-$ has basis $\{\sigma_i'=-e_iA\}_{i=1}^8.$ With the canonical isomorphism $\varphi:\Lambda^*(\mathbb{R}^8)\rightarrow Cl_8,$ we have 
\begin{equation}\label{identiv8pm}
\begin{split}
&\sigma_1=-1+\varphi(\Omega_0)+e_1e_2\cdots e_8,\\
&\sigma_{i+1}=\varphi(\alpha_i)+\varphi(\beta_i)-\varphi(*\alpha_i),\\
&\sigma_j'=-\varphi(\omega^j)-\varphi(*(\Omega_0\cdot\omega^j))+\varphi(\Omega_0\cdot\omega^j)+\varphi(*\omega^j),
\end{split}
\end{equation}
for $i=1,\cdots,7$ and $j=1,\cdots,8,$ where $\alpha_i$ and $\beta_i$ are basis of $\Lambda_7^2$ and $\Lambda_7^4$ given in (\ref{basislamb7}), and $\Omega_0$ is a $4$-form given in (\ref{Omega0def}). Then (\ref{identiv8pm}) induces natural isomorphisms $V_8^+\cong\Lambda_1^i\oplus\Lambda_7^j,V_8^-\cong\Lambda_8^k$ for $i\in\{0,4,8\},j\in\{2,4,6\}$ and $k\in\{1,3,5,7\}.$

As we all know, every $8$-manifold $M$ with a torsion-free $\mathrm{Spin}(7)$-structure $(\Omega,g)$ has a spin structure. The spin bundle $S=S^+\oplus S^-$ of $M$ corresponds to the representations $V_8^+,V_8^-$ of $\mathrm{Spin}(8).$ Then $S^+\cong \Lambda^0\oplus\Lambda_7^2\cong\Lambda_1^4\oplus\Lambda_7^2\cong\Lambda_1^4\oplus\Lambda_7^4$ and $S^-\cong\Lambda_8^k,k=1,3,5,7.$ D. Joyce showed in \cite[\S 6]{Joyce1996} that the Dirac operator $D_+$ can be written as
\begin{eqnarray*}
D_+(\xi_1,\xi_7)=8\mathrm{d}\xi_1+7\pi_8\mathrm{d}\xi_7:\Lambda_1^4\oplus\Lambda_7^4\rightarrow\Lambda_8^5.
\end{eqnarray*}
Also, $C_1,C_2,C_4$ and $C_6$ are all equivalent to the Dirac operator or its rearrangements. Since the Ricci curvature of $\mathrm{Spin}(7)$-manifold is zero, the Weitzenbock formula of Lichnerowicz \cite{Lichnerowicz1963} shows that spinors in $\mathrm{Ker}D_+$ are constant. Thus, $C_1,C_2,C_4,C_5,C_6$ are harmonic.

Finally, we need to prove the cases for $C_3$ and $C_1\oplus C_3.$
\begin{lemma}\label{lastlemhh35}
\begin{itemize}
\item[(1)] Suppose $\alpha\in A_{21}^2$ with $\mathrm{d}\alpha\in A_8^3.$ Then $\mathrm{d}\alpha=\mathrm{d}^*\alpha=0.$
\item[(2)] Suppose $\beta\in A_{27}^4$ with $\mathrm{d}^*\beta\in A_{8}^3.$ Then $\mathrm{d}\beta=\mathrm{d}^*\beta=0.$
\end{itemize}
\end{lemma}
\begin{proof}
\begin{itemize}
\item[(1)] Set $\mathrm{d}\alpha=*(\Omega\wedge\lambda)$ for some $\lambda\in A^1.$ Then $\mathrm{d}*(\Omega\wedge\lambda)=0.$ Since $A_8^3\rightarrow A_1^4\oplus A_7^4$                                                                                                                  
 is harmonic, then $\mathrm{d}^*(\Omega\wedge\lambda)=0$ leads to $\mathrm{d}\lambda=0.$ We have $\mathrm{d}\alpha=0$ from
 $$(\mathrm{d}\alpha,\mathrm{d}\alpha)=-\int_M\mathrm{d}\alpha\wedge\Omega\wedge\lambda=-\int_M\mathrm{d}(\alpha\wedge\Omega\wedge\lambda)=0.$$
 It follows from $*(\Omega\wedge\alpha)=3\alpha$ that $\mathrm{d}^*\alpha=-\frac{1}{3}*(\Omega\wedge\mathrm{d}\alpha)=0.$             
   \item[(2)] Proposition \ref{lakirela6} shows that $\Lambda_{27}^4\cdot\Lambda^1=\Lambda_{48}^5,$ therefore, $\mathrm{d}\beta=\mathrm{d}^*\beta=0$     \end{itemize}\end{proof}
  A consequence of Lemma \ref{lastlemhh35} is that $H^2(C_1\oplus C_3)=H^2(M)$ and $H^4(C_1\oplus C_3)=H_{27}^4.$ Using the fact that $\widehat{C_1\oplus C_3}=C_4\oplus\widehat{C}_-,$ we find that $C_1\oplus C_3$ and $C_3$ are harmonic.

\textbf{Acknowledgements:} The author would like to thank D.D. Joyce for explanations about the dirac operator on $\mathrm{Spin}(7)$-manifold. The author is grateful to Prof. J. Zhou for many useful suggestions. The author is also indebted to H. J. Fan for the guidance over the past years.

\end{document}